\newcommand{\sln}{\mathrm{sl}(n,\mathbb{R})}
\newcommand{\prs}{\langle\;,\;\rangle}
\newcommand{\too}{\longrightarrow}
\newcommand{\om}{\omega}
\newcommand{\esp}{\quad\mbox{and}\quad}
\def\br{[\;,\;]}
\newcommand{\G}{\mathfrak{g}}
\newcommand{\g}{\mathfrak{g}}
\newcommand{\h}{{\mathfrak{h}}}
\newcommand{\ad}{{\mathrm{ad}}}
\newcommand{\tr}{{\mathrm{tr}}}
\newcommand{\spa}{{\mathrm{span}}}
\newcommand{\B}{{\cal B}}
\newcommand{\di}{\displaystyle}
\newcommand{\al}{\alpha}
\newcommand{\be}{\beta}
\newcommand{\Ga}{\Gamma}
\newcommand{\e}{\epsilon}
\newcommand{\ch}{\check{e}}
\newcommand{\la}{\lambda}
\newcommand{\de}{\delta}
\newtheorem{theo}{Theorem}[section]
\newtheorem{pr}{Proposition}[section]
\newtheorem{Le}{Lemma}[section]
\newtheorem{co}{Corollary}[section]
\newtheorem{exem}{Example}
\newtheorem{remark}{Remark}
\font\bb=msbm10
\def\B{\hbox{\bb B}}
\def\R{\hbox{\bb R}}
\def\N{\hbox{\bb N}}
\begin{document}

\begin{frontmatter}
	
	%% Title, authors and addresses
	
	%% use the tnoteref command within \title for footnotes;
	%% use the tnotetext command for the associated footnote;
	%% use the fnref command within \author or \address for footnotes;
	%% use the fntext command for the associated footnote;
	%% use the corref command within \author for corresponding author footnotes;
	%% use the cortext command for the associated footnote;
	%% use the ead command for the email address,
	%% and the form \ead[url] for the home page:
	%%
	%\title{Left invariant para-K\"ahler and hyper-para-K\"ahler structures on Lie groups\tnoteref{label1}}
	%% \tnotetext[label1]{}
	%\author{\corref{cor1}\fnref{label2}}
	
	%% \ead{email address}
	%% \ead[url]{home page}
	%% \fntext[label2]{}
	%% \cortext[cor1]{}
	%% \address{Address\fnref{label3}}
	%\fntext[label3]{This research was conducted within the framework of Action concert\'ee CNRST-CNRS Project SPM04/13.}
	
	\title{ On the Existence and Properties of Left Invariant $k$-Symplectic Structures on Lie Groups with Bi-Invariant Pseudo-Riemannian Metric }
	
	%% use optional labels to link authors explicitly to addresses:
	\author[label1,label2]{Ait Brik Ilham, Mohamed Boucetta}
	\address[label1]{Universit\'e Hassan II\\ Facult\'e des Sciences Ain Chock\\e-mail: ilham.aitbrik@gmail.com 
		
	}
	\address[label2]{ Universit\'e Cadi-Ayyad\\
		Facult\'e des sciences et techniques\\
		BP 549 Marrakech Maroc\\e-mail: m.boucetta@uca.ac.ma
	}
	
	%	\address[label3]{Universit\'e Ibno Tofail\\ Facult\'e des Sciences\\e-mail: mansouriwadia@gmail.com
		%	}

	% \address[label2]{}
	
	% \address[label3]{}
	
	%\author{}
	
	%\address{}
	
	\begin{abstract} $k$-symplectic manifolds are a convenient framework to study  classical field theories and they are a generalization of polarized symplectic manifolds. This paper focus on  the existence and the properties of left invariant $k$-symplectic structures on Lie groups having a bi-invariant pseudo-Riemannian metric. 
		We show that compact semi-simple Lie groups and a large class of Lie groups  having a bi-invariant pseudo-Riemannian metric does not carry any left invariant $k$-symplectic structure. This class contains the oscillator Lie groups which are the only solvable non abelian Lie groups having a bi-invariant Lorentzian metric. However, we built a natural left invariant $n$-symplectic structure on $\mathrm{SL}(n,\R)$. Moreover, up to dimension 6, only three connected and simply connected Lie groups have a bi-invariant indecomposable pseudo-Riemannian metric and a left invariant k-symplectic structure, namely, the universal covering of $\mathrm{SL}(2, \R)$ with a 2-symplectic structure, the universal covering of the Lorentz group $\mathrm{SO}(3, 1)$ with a 2-symplectic structure, and a 2-step nilpotent 6-dimensional connected and simply connected Lie group with both a 1-symplectic structure and a 2-symplectic structure.

	\end{abstract}
	
	\begin{keyword} $k$-symplectic structure\sep semi-simple Lie algebras \sep  bi-invariant metric
		%% keywords here, in the form: keyword \sep keyword
		\MSC 17B60 \sep \MSC 17B99 
		
		%% MSC codes here, in the form: \MSC code \sep code
		%% or \MSC[2008] code \sep code (2000 is the default)
		
	\end{keyword}
	
\end{frontmatter}

\section{Introduction}\label{section1}

Symplectic geometry is the natural arena to develop classical mechanics.
The $k$-symplectic geometry is a generalization of the polarized symplectic geometry which was developed by  A. Awane \cite{awane2},   Awane and Goze \cite{Goze} and  C. G\"unther in \cite{gunther} as an attempt to develop a convenient geometric framework to study classical field theories (see \cite{Leon}). This geometry was also an attempt to formalize Nambu mechanic which  is a generalization of Hamiltonian mechanics involving multiple Hamiltonians.

A $k$-symplectic manifold is a smooth manifold $M$ of dimension $(k+1)n$ endowed with an involutive vector subbundle $E\subset TM$ and a family $(\om_1,\ldots,\om_k)$ of closed differential   2-forms such that:
$\mathrm{rank}(E)=nk$, the family $(\om_1,\ldots,\om_k)$ is nondegenerate,  i.e.,  $\di\cap_{i=1}^k\ker\om_i=\{0\}$ and $E$ is isotropic with respect all the $\om_i$. A 1-symplectic manifold is a symplectic manifold endowed with a Lagrangian foliation. There are few known examples of $k$-symplectic manifolds and one way to build new one is to consider  the class of  left invariant $k$-symplectic structures on a connected   Lie group $G$ and their quotients by lattices. In this case, a left invariant $k$-symplectic structure on a connected Lie group $G$  of dimension $(k+1)n$ is equivalent to the data of a Lie subalgebra $\h$ of dimension $nk$ of the Lie algebra $\G$ of $G$ and a family $\{\theta_i\in\wedge^2\G^*,i=1,\ldots,k\}$ of  nondegenerate 2-cocycle of $\G$  such that $\theta_i(\h,\h)=0$ for $i=1,\ldots,n$. We call $(\G,\h,\theta_1,\ldots,\theta_k)$ a $k$-symplectic Lie algebra. The determination of Lie groups having a left invariant $k$-symplectic structure is an open problem and their study is at its debut \cite{Awane, abchir}.

In this paper, we study left invariant $k$-symplectic structures on Lie groups having a bi-invariant pseudo-Riemannian metric. This class of Lie group is very large and contains all semi-simple Lie groups. They became relevant some years ago when they were useful in the formulation of some physical problems, for instance in the so known Adler-Kostant-Symes scheme. More recently they appeared in conformal field theory \cite{fig}. One can see \cite{ovando}, for a survey on the Lie algebras of Lie groups having a bi-invariant metric.

 In Section \ref{section3},
we show  that compact semi-simple Lie groups and a large class of Lie groups  having a bi-invariant pseudo-Riemannian metric does not carry any left invariant $k$-symplectic structure. This class contains the oscillator Lie groups which are the only solvable non abelian Lie groups having a bi-invariant Lorentzian metric. We end the section, by   building a natural left invariant $n$-symplectic structure on $\mathrm{SL}(n,\R)$. Moreover, In Section \ref{section5}, we show that, up to dimension 6, only three connected and simply connected Lie groups have a bi-invariant indecomposable pseudo-Riemannian metric and a left invariant k-symplectic structure, namely, the universal covering of $\mathrm{SL}(2, \R)$ with a 2-symplectic structure, the universal covering of the Lorentz group $\mathrm{SO}(3, 1)$ with a 2-symplectic structure, and a 2-step nilpotent 6-dimensional connected and simply connected Lie group with both a 1-symplectic structure and a 2-symplectic structure.

\section{Left invariant $k$-symplectic structures on quadratic Lie groups}\label{section3}

A $k$-symplectic manifold is a smooth manifold $M$ of dimension $(k+1)n$ endowed with a vector subbundle $E\subset TM$ and a family $(\om_1,\ldots,\om_k)$ of differential   2-forms such that:\begin{enumerate}
	\item[$(i)$] $E$ is of rank $nk$ and is involutive, i.e, for any $X,Y\in\Ga(E)$, $[X,Y]\in\Ga(E)$ hence $E$ defines a foliation on $M$,
	 \item[$(ii)$] for each $i\in\{1,\ldots,k\}$, $d\om_i=0$ and the family $(\om_1,\ldots,\om_k)$ is nondegenerate,  i.e.,  $\di\cap_{i=1}^k\ker\om_i=\{0\}$,  \item[$(iv)$] $E$ is isotropic with respect all the $\om_i$, i.e., $\om_i(E,E)=0$.
\end{enumerate}

A 1-symplectic manifold is a symplectic manifold endowed with a Lagrangian foliation.

A left invariant $k$-symplectic structure on a Lie group $G$ is a $k$-symplectic structure $(E,\om_1,\ldots,\om_k)$ on $G$ such that $E$ and the $\om_i$ are invariant by all the left multiplications of $G$. In particular, $\h=E_e$ is a Lie subalgebra of the Lie algebra $\G$ of $G$ and $\om_1(e),\ldots,\om_k(e)$ are 2-cocycles of $\G$. 

A $k$-symplectic Lie algebra is a real Lie algebra $\G$ of dimension $nk+n$ with a	subalgebra $\h$ of dimension $nk$ and a family $(\theta_1,\ldots,\theta_k)$ of  2-forms satisfying:
\begin{enumerate}\item[$(i)$] The family $(\theta_1,\ldots,\theta_k)$ is nondegenerate, i.e., $\bigcap_{i=1}^k\ker\theta_i=\{0\}$,
	\item[$(ii)$] for $i=1,\ldots,k$, $\theta_i$ is a 2-cocycle, i.e., $\mathrm{d}\theta_i(u,v,w):=\theta_i([u,v],w)+\theta_i([v,w],u)+\theta_i([w,u],v)=0$,
	\item[$(iii)$] $\h$ is totally isotropic with respect to $(\theta_1,\ldots,\theta_k)$, i.e., $\theta_i(u,v)=0$ for any $u,v\in\h$ and for any $i=1,\ldots,k$.
	
\end{enumerate}
There is a correspondence between left invariant $k$-symplectic structures on a Lie group and $k$-symplectic structures on its Lie algebra.

Note that a symplectic Lie algebra is a Lie algebra endowed with a nondegenerate 2-cocycle and a 1-symplectic Lie algebra is a symplectic Lie algebra $(\G,\theta)$ having a Lagrangian subalgebra $\h$, i.e., $\dim\h=\frac12\dim\G$ and $\theta(\h,\h)=0$.

\begin{exem} \begin{enumerate}
		\item Let $\G$ be an abelian Lie algebra of dimension $n(k+1)$, $(e_{pi},e_i)_{1\leq p\leq k,1\leq i\leq n}$ a basis of $\G$ and $(\om^{pi},\om^i)_{1\leq p\leq k,1\leq i\leq n}$ its dual basis. For  any $\al\in\{1,\ldots,k\}$, put 
		\[ \theta_\al=\sum_{i=1}^n\om^{\al i}\wedge\om^i\esp \h=\ker\om^1\cap\ldots\cap\ker\om^n. \]
		Then $(\h,\theta_1,\ldots,\theta_k)$ is a $k$-symplectic structure of $\G$. Thus any abelian Lie algebra of dimension $n(k+1)$ has a $k$-symplectic structure.
		
		\item Let $\G=\mathrm{span}\{e_1,e_2,f_1,f_2,f_3,f_4  \}$ be the 6-dimensional Lie algebra where the non vanishing Lie brackets are given by
		\[\begin{cases}
			[f_{1},f_{2}]=-f_{1},\ [f_{1},f_{4}]=-a f_{1}, \ [f_{2},f_{3}]=f_{3}, \ [f_{3},f_{4}]=-a f_{3}, \\ 
			[f_{2},e_{1}]=-e_{1},\ [f_{2},e_{2}]=-c(af_2-f_4), \ [f_{4},e_{1}]= -a e_{1}, \ [f_{4},e_{2}]=-b(af_2-f_4),\quad a,b,c\in\R \end{cases}\]The triple
		\[ \h=\mathrm{span}\{f_1,f_2,f_3,f_4  \}, \;\theta_1=f_1^*\wedge e_1^*+f_2^*\wedge e_2^*\esp \theta_2=f_3^*\wedge e_1^*+f_4^*\wedge e_2^* \]defines a 2-symplectic structure on $\G$.
		 
	\end{enumerate}

\end{exem}

Let $G$ be a Lie group and $(\G=T_eG,\br)$ its Lie algebra. A pseudo-Riemannian metric $g$ on  $G$ is called bi-invariant if its invariant by the left and the right multiplications of $G$. This is equivalent to $\prs=h(e)$ satisfying
\begin{equation}\label{quadratic} \langle [u,v],w\rangle+\langle [u,w],v\rangle=0, 
\end{equation}
for any $u,v,w\in\G$. This means that, for any $u\in\G$,
 $\ad_u$ is skew-symmetric with respect to $\prs$ where $\ad_u(v)=[u,v]$. 
 
 A Lie group endowed with a bi-invariant metric will be called {\it quadratic} and a real Lie algebra endowed with a bilinear symmetric nondegenerate form satisfying \eqref{quadratic}  will be called a {\it quadratic Lie algebra}. A metric $\prs$ satisfying \eqref{quadratic} will be called quadratic or invariant. There is a correspondence between connected quadratic Lie groups and quadratic Lie algebras so we will work at the level of Lie algebras.

There are three important classes of quadratic Lie algebra: semi-simple Lie algebras endowed with their Killing form, $T^*$-extensions of Lie algebras and quadratic Lie algebras obtained by the process of double extension (see \cite{ovando} for more details).

\begin{enumerate}
\item[$(a)$]
Recall that a semi-simple Lie algebra is a Lie algebra $\G$ such that its Killing form given by $k(u,v)=\tr(\ad_u\circ\ad_v)$ is nondegenerate. Since $k$ is invariant, $(\G,k)$ is a quadratic Lie algebra.
\item[$(b)$]
The $T^*$-extension of a Lie algebra $\G$ is the Lie algebra $T^*\G=\G\oplus\G^*$ where the Lie bracket is given by
\begin{equation}\label{br} [u+\al,v+\be]=[u,v]+\ad_u^*\be-\ad_v^*\al,\quad u,v\in\G,\al,\be\in\G^*, 
\end{equation}
where $\ad_u^*\al(v)=-\al([u,v])$.
The bilinear symmetric form $\prs$ on $T^*\G$ given by
\begin{equation}\label{met} \langle u+\al,v+\be\rangle= \al(v)+\be(u) 
	\end{equation}
is nondegenerate and invariant and hence $(T^*\G,\prs)$ is a quadratic Lie algebra.

\item[$(c)$]
Let $(\G,\br_\G,\prs_\G)$ be a quadratic  Lie algebra. The double extension of $\G$ by the mean of $A\in\mathrm{so}(\G,\prs_\G)$ is the Lie algebra $\de_A(\G)=\R e\oplus\G\oplus\R \bar{e}$ where the non vanishing Lie brackets and the metric are given, for any $u,v\in\G$, by
\[ [\bar{e},u]=Au,\; [u,v]=\langle Au,v\rangle_\G e+[u,v]_\G,\; \langle xe+u+\bar{x}\bar{e},xe+u+\bar{x}\bar{e}\rangle =2x\bar{x}+\langle u,u\rangle_\G.  \]
$(\de_A(\G),\br,\prs)$ is a quadratic Lie algebra. 
\end{enumerate}

Let us start by a well-known result.
\begin{pr}\label{nosym}
	Let $(\G,\br,\prs)$ be a quadratic Lie algebra. If $\G$ has a symplectic form $\theta$ then $\G$ is nilpotent.
\end{pr}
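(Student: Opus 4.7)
The plan is to translate the symplectic form $\theta$ into a linear operator on $\G$ via the metric, recognize that closedness of $\theta$ becomes the derivation property for this operator, and then invoke Jacobson's classical theorem on invertible derivations.

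First, I would use the nondegeneracy of $\prs$ to define $J\colon \G \to \G$ by the identity $\theta(u,v) = \langle Ju, v\rangle$. Since $\theta$ is nondegenerate, $J$ is invertible; since $\theta$ is skew-symmetric while $\prs$ is symmetric, $J$ is skew with respect to $\prs$, i.e.\ $\langle Ju, v\rangle = -\langle u, Jv\rangle$ for all $u,v\in\G$.

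Second --- and this is the key step --- I would show that $J$ is a derivation of $\G$: $J[u,v] = [Ju,v] + [u,Jv]$. Starting from
\[
\theta([u,v],w) + \theta([v,w],u) + \theta([w,u],v) = 0,
\]
rewrite each term as $\langle J\,\cdot\,,\,\cdot\,\rangle$ and then alternate two moves: (a) pull $J$ off a bracket using its skew-symmetry, and (b) slide the outer argument inside the bracket using the $\ad$-invariance of $\prs$ (i.e., $\langle [a,b],c\rangle = -\langle b,[a,c]\rangle$). After tidying up signs, the cocycle identity becomes
\[
\big\langle J[u,v] - [Ju,v] - [u,Jv],\, w\big\rangle = 0
\]
for all $w\in\G$, and nondegeneracy of $\prs$ yields the derivation relation.

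Third, apply Jacobson's theorem: any finite-dimensional Lie algebra over a field of characteristic zero admitting a nonsingular derivation is nilpotent. Since $J$ is invertible, this immediately gives that $\G$ is nilpotent.

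The only real work is the bookkeeping in the second step; the conceptual content is precisely that $\ad$-invariance of $\prs$ is what promotes the closedness of $\theta$ into the derivation property of $J$. Jacobson's theorem then does the heavy lifting, so no obstruction worth the name remains.
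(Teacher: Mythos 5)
Your proposal is correct and follows exactly the paper's argument: the paper also defines the operator $D$ by $\theta(u,v)=\langle Du,v\rangle$, observes it is an invertible derivation, and concludes by Jacobson's theorem. You merely spell out the verification that closedness of $\theta$ plus $\ad$-invariance of $\prs$ yields the derivation identity, which the paper leaves implicit, and that computation checks out.
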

\begin{proof} It a consequence of the fact that the endomorphism $D$ given by $\theta(u,v)=\langle Du,v\rangle$ is an invertible derivation and, by virtue of Jacobson's Theorem \cite{jacobson}, a Lie algebra with an invertible derivation must be nilpotent.
\end{proof}

However, non nilpotent quadratic Lie algebras can carry a $k$-symplectic structure with $k\geq2$ as the following example shows.

\begin{exem}\label{exem2}
	We consider $\mathrm{sl}(2,\R)$ with its   basis
	$\left\{ e_1=\left(\begin{matrix}0&1\\0&0\end{matrix} \right), e_2=\left(\begin{matrix}0&0\\1&0\end{matrix} \right),e_3=\left(\begin{matrix}1&0\\0&-1\end{matrix} \right)\right\},$ where
	\begin{equation*} \label{eqsl2} [e_3,e_1]=2e_1,\;[e_3,e_2]=-2e_2\esp [e_1,e_2]=e_3. \end{equation*}
	Then $\h_0=\mathrm{span}\{h,g\}$, $\theta_1=e_3^*\wedge e_2^*+b e_1^*\wedge e_2^*$ and $\theta_2=e_1^*\wedge e_2^*$ is a 2-symplectic structure on $\mathrm{sl}(2,\R)$.
\end{exem}

Let $(\G,\h,\theta_1,\ldots,\theta_k)$ be a $k$-symplectic Lie algebra of dimension $n(k+1)$.
The linear map $\Theta:\h\too (\G/\h)^*\times\ldots\times (\G/\h)^*$, $h\mapsto (\Theta_1(h),\ldots,\Theta_k(h))$ where, for any $p\in\G$,
\[ \Theta_\al(h)([p])=\theta_\al(h,p) \]
is well-defined, injective and for dimensional reasons it is an isomorphism. For any $\al\in\{1,\ldots,k\}$, the vector subspace $\h^\al$  of $\h$ given by
\[ \h^\al=\{h\in\h,\Theta_\be(h)=0,\be=1,\ldots,k,\be\not=\al \}. \]
has dimension $n$ and $\di\h=\oplus_{\al=1}^k\h^\al$.

Suppose now that $\G$ carries an invariant metric $\prs$. Then, for any $\al=1,\ldots,k$, there exists a skew-symmetric endomorphism $D_\al:\G\too\G$ such that, for any $u,v\in\G$,
\[ \theta_\al(u,v)=\langle D_\al u,v\rangle. \]
The fact that $\theta_\al$ is closed is equivalent to $D_\al$ is a derivation of $(\G,\br)$ and the fact that $(\theta_1,\ldots,\theta_k)$ is nondegenerate is equivalent to $\bigcap_{\al}\ker D_\al=\{0\}$. Moreover, $\theta_\al(\h,\h)=0$ is equivalent to $D_\al(\h)\subset\h^\perp$, where $\h^\perp$ is the orthogonal of $\h$ with respect to $\prs$.

In conclusion, a $k$-symplectic structure on a quadratic Lie algebra $(\G,\prs)$ of dimension $n(k+1)$ is given by a subalgebra $\h$ of dimension $nk$ and a family of skew-symmetric derivations $(D_1,\ldots,D_k)$ such that 
$\bigcap_{\al}\ker D_\al=\{0\}$ and  $D_\al(\h)\subset\h^\perp$ for any $\al\in\{1,\ldots,k\}$.

Note that in a quadratic Lie algebra $(\G,\prs)$, \begin{equation}\label{ad}H^2(\G)\simeq \mathrm{Der}(\G)\cap\mathrm{so}(\G,\prs)/\{\ad_u,u\in\G  \},
\end{equation}
where $\mathrm{Der}(\G)$ is the Lie algebra of derivations of $\G$ and $\mathrm{so}(\G,\prs)$ is the Lie algebra of skew-symmetric endomorphisms.
\begin{pr}\label{center} Let $(\G,\br,\prs)$ be a quadratic Lie algebra. Then the following assertions hold.

\begin{enumerate}
	\item If $H^2(\G)=\{0\}$ and the center $Z(\G)\not=\{0\}$ then $G$ has no $k$-symplectic structure.
	\item If $\dim Z(\G)=1$ and $Z(\G)$ is nondegenerate then, for any skew-symmetric derivation $D$, $D(Z(\G))=0$. In particular, $\G$ has no $k$-symplectic structure.
	
\end{enumerate}

\end{pr}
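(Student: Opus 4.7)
The plan is to exploit the characterization stated just before the proposition: a $k$-symplectic structure on a quadratic Lie algebra $(\G,\prs)$ is equivalent to a family of skew-symmetric derivations $D_1,\ldots,D_k$ of $\G$ satisfying $\bigcap_{\al=1}^{k}\ker D_\al=\{0\}$ (plus an isotropy condition which plays no role here). The goal in both items is to show that $Z(\G)$ is forced into $\ker D_\al$ for every skew-symmetric derivation $D_\al$, so nondegeneracy contradicts the assumption $Z(\G)\neq\{0\}$.

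For (1), I would invoke the identification $H^2(\G)\simeq(\mathrm{Der}(\G)\cap\mathrm{so}(\G,\prs))/\{\ad_u:u\in\G\}$ recalled in \eqref{ad}. The hypothesis $H^2(\G)=\{0\}$ says that every skew-symmetric derivation is inner, so each $D_\al=\ad_{u_\al}$ for some $u_\al\in\G$. Since inner derivations annihilate the center, one has $Z(\G)\subset\bigcap_\al\ker D_\al$, which contradicts the nondegeneracy of $(\theta_1,\ldots,\theta_k)$ as soon as $Z(\G)\neq\{0\}$.

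For (2), the first step is the general fact that the center is preserved by every derivation $D$: if $z\in Z(\G)$, then for all $u\in\G$, $[u,D(z)]=D([u,z])-[D(u),z]=0$, so $D(z)\in Z(\G)$. When $\dim Z(\G)=1$, pick a generator $z$; then $D(z)=\la z$ for some $\la\in\R$. Using that $D$ is skew-symmetric, $\la\langle z,z\rangle=\langle D(z),z\rangle=-\langle z,D(z)\rangle=-\la\langle z,z\rangle$, hence $2\la\langle z,z\rangle=0$. The nondegeneracy of the restriction of $\prs$ to $Z(\G)$ gives $\langle z,z\rangle\neq0$, so $\la=0$ and $D(z)=0$. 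This yields $D(Z(\G))=0$ for every skew-symmetric derivation $D$, and the same argument as in (1) prevents $\G$ from carrying a $k$-symplectic structure.

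I do not foresee a serious obstacle, since both assertions follow cleanly from the derivation-based characterization of $k$-symplectic structures combined with the elementary fact that derivations preserve the center; the only subtle point is noticing that the hypothesis that $\prs$ restricts nondegenerately to $Z(\G)$ is exactly what is needed to pass from $\la\langle z,z\rangle=0$ to $\la=0$.
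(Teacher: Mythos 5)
Your proof is correct and follows essentially the same route as the paper: for (1) you use the identification of $H^2(\G)$ with outer skew-symmetric derivations to conclude every $D_\al$ is inner and hence kills the center, and for (2) you use that derivations preserve the center together with skew-symmetry to force $D(e_0)=0$; in both cases $Z(\G)\subset\bigcap_\al\ker D_\al$ contradicts nondegeneracy. The paper's proof is just a terser version of exactly this argument.
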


\begin{proof}   \begin{enumerate}
		\item If $H^2(\G)=\{0\}$ then any slew-symmetric derivation is inner and hence trivial on the center.
		\item Put $Z(\G)=\R e_0$ and let $D$ be a skew-symmetric derivation. Since the derivations preserve the center, then $D(e_0)=\al e_0$. If $\langle e_0,e_0\rangle \not=0$ then $\al=0$.\qedhere
	\end{enumerate}
\end{proof}

\begin{remark} \label{rem}  Note that a quadratic Lie algebra $(\G,\prs)$ satisfies
	$[\G,\G]^\perp=Z(\G)$ and hence a solvable quadratic Lie algebra satisfies $Z(\G)\not=\{0\}$.

\end{remark}

\begin{pr}\label{nilpotent}Let $(\G,\br,\prs)$ be a quadratic nilpotent Lie algebra of dimension $n$. If $\G$ admits a $n-1$-symplectic structure then $\G$ is abelian.
	
\end{pr}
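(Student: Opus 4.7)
The plan is to transfer the $k$-symplectic data into a system of skew-symmetric derivations and to exploit the fact that $\h$ has codimension one in $\G$. Since $\h$ is a codimension-$1$ subalgebra, it is automatically an ideal of $\G$. Fixing $e\in\G\setminus\h$, the isotropy condition $\theta_i(\h,\h)=0$ forces each $\theta_i$ to be determined by a single linear form $\al_i\in\h^*$ via $\al_i(h)=\theta_i(e,h)$, and the nondegeneracy of $(\theta_1,\ldots,\theta_{n-1})$ is equivalent to $(\al_1,\ldots,\al_{n-1})$ being a basis of $\h^*$. Using the metric, each $\theta_i$ corresponds to a skew-symmetric derivation $D_i$ with $D_i(\h)\subset\h^\perp$; since $\dim\h^\perp=1$, each $D_i$ sends $\h$ into a single line.

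I would then split into two cases. First suppose $\h$ is nondegenerate, so $\h^\perp=\R e_0$ with $\langle e_0,e_0\rangle=\epsilon\neq 0$ and $D_ih=-\epsilon^{-1}\al_i(h)e_0$ for $h\in\h$. Applying the derivation identity $D_i[x,y]=[D_ix,y]+[x,D_iy]$ with $x,y\in\h$ and decomposing along $\G=\h\oplus\R e_0$ produces two equations: $\al_i([x,y])=0$ (hence $[\h,\h]=0$, as the $\al_i$ span $\h^*$) and $\al_i(x)Ty=\al_i(y)Tx$, where $T=\ad_{e_0}|_\h$. The second equation forces $\ker\al_i\subset\ker T$ for every $i$; as any two linearly independent $\al_i$ cut out distinct hyperplanes of $\h$ whose sum is $\h$ (which is fine for $n\geq 3$), one gets $T=0$, so $\G$ is abelian. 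The cases $n\leq 2$ are immediate since every nilpotent Lie algebra of dimension at most two is abelian.

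Now suppose $\h$ is degenerate, which is the more delicate case. Then $\h^\perp=\R f\subset\h$, and since $\R f$ is a $1$-dimensional ideal of the nilpotent algebra $\G$, the element $f$ must be central. Choose $e$ with $\langle e,f\rangle=1$ and $\langle e,e\rangle=0$, and write $D_ih=\gamma_i(h)f$ for $h\in\h$ (one checks $\gamma_i=-\al_i$, so the $\gamma_i$ span $\h^*$). Using the centrality of $f$, the derivation identity on $[x,y]$ with $x,y\in\h$ collapses to $\gamma_i([x,y])=0$, hence $[\h,\h]=0$. Writing $D_ie=-\gamma_i(f)e+v_i$ with $v_i\in\h$ (extracted from skew-symmetry together with the known values of $\langle D_ie,\cdot\rangle$) and applying the derivation identity to $[e,h]$, one obtains $\gamma_i(f)[e,h]=-\gamma_i([e,h])f$; picking an index $i$ with $\gamma_i(f)\neq 0$ yields $[e,\h]\subset\R f$. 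The main obstacle I would expect is to rule out this line being nonzero; this, however, follows directly from bi-invariance, since $\langle[e,h],e\rangle=-\langle h,[e,e]\rangle=0$, while $[e,h]\in\R f$ and $\langle f,e\rangle=1$ together force $[e,h]=0$. Combined with $[\h,\h]=0$, this shows that $\G$ is abelian.
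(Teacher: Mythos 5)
Your proof is correct, and it starts from the same translation the paper uses (each $\theta_i$ becomes a skew-symmetric derivation $D_i$ with $D_i(\h)\subset\h^\perp$, and everything hinges on $\dim\h^\perp=1$ plus nilpotency), but the execution is genuinely different. You split into the cases $\h$ nondegenerate and $\h$ degenerate, introduce the explicit forms $\al_i$ (resp.\ $\gamma_i$), and kill $[\h,\h]$ and $\ad_e|_{\h}$ by direct computation in each case, including a construction of $D_ie$ in the degenerate case. The paper avoids the case distinction entirely: invariance and the fact that $\h$ is a subalgebra give $[\h,\h^\perp]\subset\h^\perp$, and since $\dim\h^\perp=1$ and every $\ad_u$ is nilpotent this forces $[\h,\h^\perp]=\{0\}$; the derivation identity then yields $D_i([\h,\h])=\{0\}$ for all $i$, hence $[\h,\h]=\{0\}$ by nondegeneracy of the family, and finally $\langle [e,x],y\rangle=\langle e,[x,y]\rangle=0$ shows $[\G,\G]\subset\h^\perp$, so $\h\subset[\G,\G]^\perp=Z(\G)$ and $\G$ is abelian. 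The uniform argument is shorter and never needs to know whether $\h^\perp$ sits inside $\h$; your version is longer but makes explicit exactly which brackets vanish at each stage. Two small points you should state explicitly, since both use nilpotency and not just the quadratic structure: the claim that a codimension-one subalgebra is automatically an ideal (normalizer argument), and the claim that the one-dimensional ideal $\R f=\h^\perp$ is central (a nilpotent operator on a one-dimensional invariant line is zero) --- and the fact that $\R f$ is an ideal at all deserves the one-line justification that the orthogonal of an ideal is an ideal in a quadratic Lie algebra.
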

\begin{proof} Suppose that $\G$ carries a $n-1$-symplectic structure. Then there exists a $n-1$-dimensional Lie subalgebra $\h$, a family $D_1,\ldots,D_{n-1}$ of skew-symmetric derivations such that $D_i(\h)\subset\h^\perp$ and $\bigcap_{i}\ker D_i=\{0\}$. Since $\h$ is a subalgebra and $\prs$ is quadratic then $[\h,\h^\perp]\subset\h^\perp$. But $\dim\h^\perp=1$ and $\ad_u$ is nilpotent for any $u$ and hence $[\h,\h^\perp]=\{0\}$. Now the facts that $D_i(\h)\subset\h^\perp$, $[\h,\h^\perp]=\{0\}$ and $D_i$ a derivation implies that $D_i([\h,\h])=\{0\}$ for $i=1,\ldots,n-1$ and hence $[\h,\h]=\{0\}$. Put $\G=\h\oplus\R e$. We have $[\g,\g]=[\h,\h]+[\h,e]=[\h,e]$ and for any $x,y\in\h$,
	\[ \langle [e,x],y\rangle=\langle e,[x,y]\rangle=0. \]So $[\G,\G]\subset\h^\perp$ and hence $\h\subset[\G,\G]^\perp=Z(\G)$. This implies that $\G$ is abelian which completes the proof.
\end{proof}

Let us state our first main result.

\begin{theo}\label{main} Let $(\G,\prs)$ be a quadratic  Lie algebra  and $(\h,\theta_1,\ldots,\theta_k)$  a $k$-symplectic structure on $\G$. If $\h$ is nondegenerate with respect to $\prs$ then $([\theta_1],\ldots,[\theta_k])$ are linearly independent in $H^2(\G)$. In particular, if $k>\dim H^2(\G)$ then $\h$ is degenerate.
\end{theo}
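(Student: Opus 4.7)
The plan is to translate the cohomological statement into a concrete algebraic condition via the isomorphism \eqref{ad}. Suppose $\sum_{\al=1}^k\lambda_\al[\theta_\al]=0$ in $H^2(\G)$; by \eqref{ad} this is equivalent to $\sum_\al\lambda_\al D_\al=\ad_u$ for some $u\in\G$, and the goal becomes to show that this forces $\lambda_\al=0$ for every $\al$.

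First I would exploit the orthogonal splitting $\G=\h\oplus\h^\perp$ afforded by the nondegeneracy of $\h$, writing $u=u_1+u_2$ with $u_1\in\h$ and $u_2\in\h^\perp$. Since $D_\al(\h)\subset\h^\perp$, $\ad_u$ sends $\h$ into $\h^\perp$; but for $h\in\h$ we have $\ad_u(h)=[u_1,h]+[u_2,h]$, with $[u_1,h]\in\h$ (as $\h$ is a subalgebra) and $[u_2,h]\in\h^\perp$ (by invariance of $\prs$), so comparing $\h$-components forces $[u_1,h]\in\h\cap\h^\perp=\{0\}$. This is precisely where the nondegeneracy of $\h$ is essential, and it gives $u_1\in Z(\h)$ together with $\ad_{u_2}|_\h=\sum_\al\lambda_\al D_\al|_\h$ as maps $\h\to\h^\perp$. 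Using the canonical decomposition $\h=\bigoplus_\be\h^\be$ established just before the theorem, where $D_\al|_{\h^\be}=0$ for $\al\neq\be$ and $D_\be|_{\h^\be}:\h^\be\to\h^\perp$ is an isomorphism, restricting to each $\h^\be$ yields the rigid constraint
\[
\lambda_\be D_\be|_{\h^\be}=\ad_{u_2}|_{\h^\be},
\]
so a single $u_2\in\h^\perp$ must simultaneously realize every $\lambda_\be D_\be|_{\h^\be}$ as an inner map.

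The hard part will be deducing $\lambda_\be=0$ for every $\be$ from this family of relations. The plan is to bring in the global equation $\sum_\al\lambda_\al D_\al=\ad_u$ on $\h^\perp$ as well: projecting onto $\h^\perp$ gives the additional identity $\sum_\al\lambda_\al\,\pi_{\h^\perp}\circ D_\al|_{\h^\perp}=\ad_{u_1}|_{\h^\perp}+\pi_{\h^\perp}\circ\ad_{u_2}|_{\h^\perp}$, which together with $u_1\in Z(\h)$ rigidly constrains the admissible $u_2$. Concretely, I would pair the identity $\lambda_\be D_\be|_{\h^\be}=\ad_{u_2}|_{\h^\be}$ with a basis $\{h^\be_i\}\subset\h^\be$ chosen so that $D_\be h^\be_i=e_i$ for a basis $\{e_i\}$ of $\h^\perp$, together with its metric-dual $\{e^i\}$, and use the total antisymmetry of the trilinear form $\langle[\cdot,\cdot],\cdot\rangle$ of the quadratic Lie algebra to obtain the scalar identity $n\lambda_\be=\langle u_2,\sum_i[h^\be_i,e^i]\rangle$; combining this with the $\h^\perp$-to-$\h^\perp$ constraint on $u_1$ should then force the right-hand side to vanish, yielding $\lambda_\be=0$ for every $\be$ and hence the linear independence of $([\theta_1],\ldots,[\theta_k])$ in $H^2(\G)$.
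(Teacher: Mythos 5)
Your setup is essentially the paper's: writing $\theta_\al(\cdot,\cdot)=\langle D_\al\cdot,\cdot\rangle$ with $D_\al$ a skew-symmetric derivation, decomposing $u=u_1+u_2$ along $\G=\h\oplus\h^\perp$, observing that $[u_1,\h]\subset\h\cap\h^\perp=\{0\}$ (this is where nondegeneracy enters), and reducing to the relations $\la_\be D_\be|_{\h^\be}=\ad_{u_2}|_{\h^\be}$, where $D_\be|_{\h^\be}:\h^\be\too\h^\perp$ is an isomorphism. Up to that point the argument is correct and matches the paper.

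The concluding step, however, is a genuine gap. The scalar identity you propose, $n\la_\be=\langle u_2,\sum_i[h^\be_i,e^i]\rangle$, is a tautology: by invariance of the metric, $\langle u_2,[h^\be_i,e^i]\rangle=\langle e^i,[u_2,h^\be_i]\rangle=\la_\be\langle e^i,D_\be h^\be_i\rangle=\la_\be$, so summing over $i$ merely returns $n\la_\be=n\la_\be$ and carries no new information. Nor is it explained how the ``constraint on $u_1$'' from the $\h^\perp$-projection would force the right-hand side to vanish; that projection involves the brackets $[\h^\perp,\h^\perp]$ and the restrictions $D_\al|_{\h^\perp}$, none of which are controlled by the hypotheses. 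The missing idea is a one-liner: pair $\ad_{u_2}h$ with $u_2$ itself. Skew-symmetry of $\ad_{u_2}$ gives $\langle[u_2,h],u_2\rangle=0$, hence $\la_\be\langle D_\be h,u_2\rangle=0$ for all $h\in\h^\be$. If $\la_\be\ne0$, then $u_2\perp D_\be(\h^\be)=\h^\perp$, so $u_2\in(\h^\perp)^\perp\cap\h^\perp=\h\cap\h^\perp=\{0\}$ (nondegeneracy again), and then $\la_\be D_\be|_{\h^\be}=\ad_{u_2}|_{\h^\be}=0$ forces $\la_\be=0$, a contradiction. This is exactly the paper's argument, phrased there as: the image of $\ad_{u_2}|_{\h^\be}$ lies in $u_2^\perp\cap\h^\perp$, of dimension $\dim\h^\perp-1$ when $u_2\ne0$, whereas $D_\be(\h^\be)$ is all of $\h^\perp$.
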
	

\begin{proof} For  $\al=1,\ldots,k$,
	\[ \theta_\al(u,v)=\langle D_\al u,v\rangle \]where $D_\al$ is a skew-symmetric derivation of $\G$.
	Put $\h^\al=\bigcap_{\be\not=\al}(\ker D_\al\cap\h)$. Then
	\[ \h=\h^1\oplus\ldots\oplus\h^k \]and $(D_\al)_{|\h^\al}:\h^\al\too \h^\perp$ is an isomorphism. Suppose that $\h$ is nondegenerate and $([\theta_1],\ldots,[\theta_k])$ are linearly dependent in $H^2(\G)$. Then there exists $(a_1,\ldots,a_k)\not=(0,\ldots,0)$ and $x\in\G$ such that 
	\[ \ad_x=a_1D_1+\ldots+a_kD_k. \]
	Suppose that $a_1\not=0$ and put $x=z+t$ where $z\in\h$ and $t\in\h^\perp$.  Since $\h$ is a subalgebra and the metric is invariant then, for any $h\in\h^1$, $[h,\h^\perp]\subset\h^\perp$ and hence
	\[ [x,h]=a_1D_1(h)=[z,h]+[t,h]=[t,h]. \]
	Then $t\not=0$ and $(D_1)_{|\h^1}=\frac{1}{a_1}(\ad_t)_{|\h^1}$. But
	$\langle [t,h],t\rangle=0$ so $(D_1)_{|\h^1}(\h^1)\subset t^\perp\cap\h^\perp$. But this is impossible since $$\dim (t^\perp\cap\h^\perp)=\dim\G-1-\dim\h=\dim\h^\perp-1. $$This completes the proof.
\end{proof}

This theorem has two important corollaries.

\begin{co}\label{co1} Let $(\G,\prs)$ be a quadratic Lie  Lie algebra with $H^2(\g)=\{0\}$ and $(\h,\theta_1,\ldots,\theta_k)$ a $k$-symplectic structure on $\G$.  Then $\h$ is degenerate with respect to $\prs$.
\end{co}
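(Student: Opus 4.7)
The plan is to deduce this as an immediate contrapositive of Theorem \ref{main}. That theorem asserts that nondegeneracy of $\h$ with respect to $\prs$ forces the cohomology classes $[\theta_1],\ldots,[\theta_k]$ to be linearly independent in $H^2(\G)$, so I want to argue that the vanishing hypothesis $H^2(\G)=\{0\}$ makes this independence impossible, which by contraposition forces $\h$ to be degenerate.

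Concretely, under $H^2(\G)=\{0\}$ every closed 2-form on $\G$ is exact, so in particular each cocycle $\theta_\al$ has $[\theta_\al]=0$ in $H^2(\G)$. A family consisting of the zero vector is linearly dependent (and one has at least one $\theta_\al$ since a $k$-symplectic structure requires $k\ge 1$), so the collection $([\theta_1],\ldots,[\theta_k])$ fails the linear independence conclusion of Theorem \ref{main}. Therefore the hypothesis of that theorem must fail, meaning $\h$ cannot be nondegenerate with respect to $\prs$. There is essentially no obstacle here; the only thing to be careful about is noting that the dependence conclusion indeed applies to the zero family, which is automatic from the definition of linear independence.
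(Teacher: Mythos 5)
Your argument is correct and is exactly how the paper derives this corollary: it is the special case of the final sentence of Theorem \ref{main} ("if $k>\dim H^2(\G)$ then $\h$ is degenerate") with $\dim H^2(\G)=0<k$, i.e., the contrapositive applied to the fact that the zero classes $[\theta_1]=\cdots=[\theta_k]=0$ cannot be linearly independent. No gaps.
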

Recall that a the connected and simply connected Lie group associated to a semi-simple Lie algebra is compact if and only if  the Killing form is definite negative.
\begin{co}\label{co2}
	Let $\G$ be a semi-simple Lie algebra and $(\h,\theta_1,\ldots,\theta_k)$ a $k$-symplectic structure on $\G$. Then $\h$ is degenerate with respect to the Killing form of $\G$. In particular, a compact semi-simple Lie group has no left invariant $k$-symplectic structure.
\end{co}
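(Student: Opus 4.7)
The plan is to deduce this corollary directly from Corollary \ref{co1} applied to the Killing form, together with two standard facts about semi-simple Lie algebras. First I would verify that the hypotheses of Corollary \ref{co1} are satisfied for $(\G, k)$, where $k$ is the Killing form: $k$ is nondegenerate (definition of semi-simplicity) and invariant, so $(\G, k)$ is a quadratic Lie algebra, and by Whitehead's second lemma $H^2(\G) = \{0\}$. Corollary \ref{co1} then immediately yields that $\h$ is degenerate with respect to $k$, establishing the first assertion.

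For the second assertion, I would argue by contradiction. Suppose $G$ is a compact semi-simple Lie group carrying a left invariant $k$-symplectic structure, which corresponds to a $k$-symplectic structure $(\h, \theta_1, \ldots, \theta_k)$ on its Lie algebra $\G$. By the fact recalled just before the statement, compactness of the connected and simply connected model is equivalent to the Killing form $k$ being negative definite; passing to the universal cover does not affect the Lie algebra, so $k$ is negative definite on $\G$. But a definite bilinear form is automatically nondegenerate on every subspace, so $\h$ is nondegenerate with respect to $k$, contradicting the first assertion. Hence no such structure can exist.

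There is essentially no obstacle here, as both parts reduce to invoking Corollary \ref{co1} plus the well-known vanishing $H^2(\G) = 0$ for semi-simple $\G$ and the characterization of compactness via definiteness of the Killing form. The only point that deserves explicit mention is Whitehead's lemma, since it is the ingredient that makes Corollary \ref{co1} applicable to \emph{every} semi-simple $\G$ rather than to some restricted subclass.
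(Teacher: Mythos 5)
Your proposal is correct and follows exactly the route the paper intends: apply Corollary \ref{co1} to $(\G,k)$ using nondegeneracy of the Killing form and the vanishing $H^2(\G)=0$ (Whitehead, or equivalently the fact that all skew-symmetric derivations of a semi-simple Lie algebra are inner via the paper's identification \eqref{ad}), then note that a negative definite Killing form restricts nondegenerately to every subspace, ruling out the compact case. Nothing is missing.
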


The following theorem shows that, in addition to  the class of semi-simple Lie algebras, there is a large class of quadratic Lie algebras satisfying $H^2(\G)=\{0\}$.

\begin{theo}\label{h2} If $\G$ is a simple Lie algebra then $H^2(T^*\G)=0$.
	
\end{theo}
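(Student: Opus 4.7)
The plan is to use the identification \eqref{ad}: since the canonical pairing on $T^*\G$ is invariant and $T^*\G$ is quadratic, it suffices to show that every skew-symmetric derivation $D$ of $T^*\G$ is inner. I would first write $D$ in block form with respect to the splitting $T^*\G=\G\oplus\G^*$,
\[ D(u+\al)=Au+B\al+Cu+E\al,\qquad A:\G\too\G,\ B:\G^*\too\G,\ C:\G\too\G^*,\ E:\G^*\too\G^*, \]
and impose skew-symmetry with respect to $\langle u+\al,v+\be\rangle=\al(v)+\be(u)$. A short calculation forces $E=-A^*$, $B\in\wedge^2\G$, and $C\in\wedge^2\G^*$.

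I would then unpack the derivation identity on the three types of pairs. From $(u,v)\in\G\times\G$ one gets (i) $A\in\mathrm{Der}(\G)$ and (ii) $C\in Z^2(\G,\R)$. From $(u,\be)\in\G\times\G^*$ the $\G^*$-component is the transpose of (i) and is automatic, while the $\G$-component gives (iii) $B(\ad_u^*\be)=[u,B\be]$, i.e.\ $B$ is $\G$-equivariant between the coadjoint representation on $\G^*$ and the adjoint representation on $\G$. From $(\al,\be)\in\G^*\times\G^*$ one gets (iv) $\ad_{B\al}^*\be=\ad_{B\be}^*\al$.

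Simplicity of $\G$ now handles $A$, $C$, and $B$ in turn. Whitehead's first lemma gives $A=\ad_{x_0}$ for some $x_0\in\G$, and his second lemma ($H^2(\G,\R)=0$) gives $\phi\in\G^*$ with $C(u,v)=-\phi([u,v])$. For $B$, ad-invariance of the Killing form $k$ makes $\kappa:\G\too\G^*$, $\kappa(u)(v)=k(u,v)$, an isomorphism of $\G$-modules, so condition (iii) translates into $T:=B\circ\kappa\in\mathrm{End}_\G(\G)$. Rewriting (iv) with $\al=\kappa(x)$, $\be=\kappa(y)$ and using ad-invariance of $k$ yields $[Tx,y]=[Ty,x]$ for all $x,y\in\G$; combined with the module-map identity $[u,Tv]=T[u,v]$ this forces $T[x,y]=-T[x,y]$, so $2T[x,y]=0$. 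Since $\G$ is simple and non-abelian we have $[\G,\G]=\G$, hence $T=0$ and $B=0$. A direct check then shows that the inner derivation $\ad_{x_0-\phi}$ of $T^*\G$ realises exactly the block data $(A,B,C,E)=(\ad_{x_0},0,C,-\ad_{x_0}^*)$, so $D=\ad_{x_0-\phi}$ and $[D]=0$ in $H^2(T^*\G)$.

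\textbf{Main obstacle.} Whitehead's lemmas dispose of $A$ and $C$ on sight. The delicate step is the $B$-block: it is a genuinely new feature of the $T^*$-extension and cannot be ruled out by a single cohomological vanishing. One must combine the equivariance condition (iii) with the symmetry condition (iv) and use both the ad-invariance of $k$ and $[\G,\G]=\G$ to force $B=0$. That is the step I would write out most carefully.
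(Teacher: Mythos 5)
Your proof is correct, and it follows the paper's strategy up to the decisive step: both arguments reduce the statement, via the identification $H^2(T^*\G)\simeq \big(\mathrm{Der}(T^*\G)\cap\mathrm{so}(T^*\G)\big)/\{\ad_\xi\}$, to showing that every skew-symmetric derivation of $T^*\G$ is inner; both use the same block decomposition (your $(A,B,C,E)$ is exactly the paper's $(D_1,\pi_\#,\om^\flat,-D_1^*)$); and both dispose of $A$ and $C$ by Whitehead's lemmas. Where you genuinely diverge is in killing the block $B=\pi_\#$. The paper forms $\pi_\#\circ\kappa^\flat$, notes that it commutes with every $\ad_x$ because $\pi$ and the Killing form are both invariant, complexifies, and runs a Schur-type argument (eigenspaces are ideals, hence the map is scalar, and skew-symmetry makes the scalar zero). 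You instead extract from the $\G^*\times\G^*$ derivation identity the symmetry $[Tx,y]=[Ty,x]$ and play it against the equivariance $T[x,y]=[x,Ty]$ to obtain $2T[x,y]=0$, hence $T=0$ because $[\G,\G]=\G$. Your route is more elementary and arguably more robust: it never needs $T$ to be skew-symmetric, and it sidesteps the point, left implicit in the paper, that the complexification of a real simple Lie algebra need not be simple, so the eigenspace argument strictly requires an extra word for that case. One small remark: your condition (iv) is not independent of (iii) --- given the antisymmetry of $B$ it follows from the equivariance, and is just the $\ad^*$-invariance of $\pi$ that the paper records --- so the two proofs really start from the same data on $\pi$ and differ only in how they exploit it; keeping (iv) explicit, as you do, is harmless and is precisely what makes your cancellation transparent.
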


\begin{proof} Recall that the quadratic structure of $T^*\G=\G\oplus\G^*$ is given by \eqref{br} and \eqref{met}.
	For $\om\in\wedge^2\G^*$ and $\pi\in\wedge^2\G$ we denote $\om^\flat:\G\too\G^*$ and $\pi_\#:\G^*\too\G$ given by $\prec \om(x),y\succ=\om(x,y)$ and $\prec\be,\pi_\#(\al)\succ=\pi(\al,\be)$. 
	According to \eqref{ad}, we must show that any skew-symmetric derivation of $T^*\G$ is inner. Note first that for any $x+\al\in T^*\G$, we have
	\[ \ad_{x+\al}=\left(\begin{matrix} \ad_x&0\\ -(d\al)^\flat&\ad_x^* \end{matrix}  \right), \]where $d\al(u,v)=-\al([u,v])$ and, for any skew-symmetric derivation $D$, there exists $\om\in\wedge^2\G^*$,  $\pi\in\wedge^2\G$ and $D_1:\G\too\G$ such that
	\[ D=\left(\begin{matrix} D_1&\pi_\#\\\om^\flat&-D_1^* \end{matrix} \right), \]where $D_1^*$ is the dual of $D_1$. One can see easily that $D$ is a derivation if and only if $D_1$ is a derivation of $\G$, $\om$ is a 2-cocycle of $\G$ and $\pi$ is $\ad^*$-invariant, i.e., for any $x\in\G$ and $\al,\be\in\G^*$,
	\[ \pi(\ad_x^*\al,\be)+\pi(\al,\ad_x^*\be)=0. \]
	Since $\G$ is simple then $\om=d\al$ and $D_1=\ad_x$. On the other hand, let $A=\pi_\#\circ\kappa^\flat$.  Since both $\pi$ and $\kappa$ are $\ad$-invariant then $[A,\ad_x]=0$ for any $x\in\G$. The complexification $A^{\mathbb{C}}$ of $A$ is an endomorphism of $\G^{\mathbb{C}}$ which commutes with $\ad_x$ for any $x\in \G^{\mathbb{C}}$ and any eigenspace of $A^{\mathbb{C}}$ is an ideal which implies that $A=\la \mathrm{Id}_\G$ and since $A$ is skew-symmetric then $\la=0$ which completes the proof.
	\end{proof}

\begin{theo}\label{de} Let $\de_A(\G)=\R e\oplus\G\oplus\bar{e}$ be the double extension of an abelian quadratic Lie algebra $(\G,\prs_\G)$ with $A$ non nilpotent and  let $(\h,\prs_\h)$ be an abelian quadratic Lie algebra. Then, for any skew-symmetric derivation $D$ of $\de_A(\G)\oplus\h$ (product of the quadratic Lie algebras $\de_A(\G)$ and $\h$), $D(e)=0$. In particular, $\de_A(\G)\oplus\h$ has no $k$-symplectic structure.
	
\end{theo}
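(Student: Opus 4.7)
The plan is to locate $D(e)$ in a very small subspace using the quadratic/center duality, then kill the two remaining scalar parameters one by one via the derivation identity on the two fundamental families of brackets in $\de_A(\G)$. Set $\g=\de_A(\G)\oplus\h$. Since $e$ is central, $D(e)\in Z(\g)$; since $[u,v]=\langle Au,v\rangle_\G e$ for $u,v\in\G$ and $A\neq 0$ (as $A$ is non-nilpotent), one has $e\in[\g,\g]$. Remark~\ref{rem} gives $[\g,\g]=Z(\g)^\perp$, and any skew-symmetric derivation preserves both $Z(\g)$ and $Z(\g)^\perp$, so $D(e)\in Z(\g)\cap[\g,\g]$. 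Computing $Z(\g)=\R e\oplus\ker A\oplus\h$ and $[\g,\g]=\R e+\mathrm{Im}(A)$ yields $Z(\g)\cap[\g,\g]=\R e\oplus(\ker A\cap\mathrm{Im}(A))$, so I may write $D(e)=\beta e+w$ with $\beta\in\R$ and $w\in\ker A\cap\mathrm{Im}(A)\subset\G$.

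To eliminate $w$, parametrize $D(u)=a(u)e+f(u)+b(u)\bar e+g(u)$ for $u\in\G$ (with $a,b\in\G^*$, $f\in\mathrm{End}(\G)$ and $g:\G\to\h$) and $D(\bar e)=\gamma e+p+\bar\gamma\bar e+h_0$. Applying $D$ to the bracket $[\bar e,u]=Au$ and matching components forces $b(Au)=0$ for all $u$ (so $b$ vanishes on $\mathrm{Im}(A)$) and gives the commutation $[f,A]=\bar\gamma A$. Applying $D$ to $[u,v]=\langle Au,v\rangle_\G e$ and projecting onto the $\G$-component yields
\[
b(u)Av-b(v)Au=\langle Au,v\rangle_\G w.
\]
Substituting $v=Az$, using $b|_{\mathrm{Im}(A)}=0$ and skew-symmetry of $A$, this becomes $b(u)A^2z=-\langle u,A^2z\rangle_\G w$ for every $u,z\in\G$. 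Suppose $w\neq 0$. Then $\mathrm{Im}(A^2)\not\subset\R w$, for otherwise $\mathrm{Im}(A^3)=A\,\mathrm{Im}(A^2)\subset A(\R w)=\{0\}$ (since $Aw=0$), contradicting non-nilpotence. Choose $z$ with $A^2z\notin\R w$; linear independence of $A^2z$ and $w$ forces both $b\equiv 0$ and $\langle u,A^2z\rangle_\G=0$ for all $u$, whence $A^2z=0$ by non-degeneracy of $\prs_\G$, a contradiction. Hence $w=0$.

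The main obstacle is killing $\beta$. Skew-symmetry of $D$ against $\langle e,\bar e\rangle=1$ gives $\beta+\bar\gamma=0$, so it suffices to show $\bar\gamma=0$. From $[f,A]=\bar\gamma A$ one obtains by induction $[f,A^n]=n\bar\gamma A^n$, whence for the minimal polynomial $p(x)=\sum_k a_k x^k$ of $A$ we have $0=[f,p(A)]=\bar\gamma\,A\,p'(A)$. If $\bar\gamma\neq 0$, then $p(x)\mid xp'(x)$; factoring $p$ over $\Co$ as $\prod_i(x-\lambda_i)^{m_i}$, the rational function $xp'(x)/p(x)=\sum_i m_i+\sum_i m_i\lambda_i/(x-\lambda_i)$ must be a polynomial, which forces every $\lambda_i=0$, i.e.\ $A$ nilpotent, contradicting the hypothesis. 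Therefore $\bar\gamma=\beta=0$ and $D(e)=0$. The ``in particular'' statement then follows from the dictionary established earlier: a $k$-symplectic structure on $\g$ would produce skew-symmetric derivations $D_1,\ldots,D_k$ with $\bigcap_\alpha\ker D_\alpha=\{0\}$, yet each $D_\alpha$ would annihilate the nonzero central vector $e$, the required contradiction.
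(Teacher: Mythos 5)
Your proof is correct. It follows the same broad strategy as the paper's --- decompose $D$ into components adapted to $\R e\oplus\G\oplus\R\bar e\oplus\h$, apply the derivation identity to the two bracket families $[\bar e,u]=Au$ and $[u,v]=\langle Au,v\rangle_\G e$, extract a commutation relation between $A$ and the $\G$-block of $D$, and invoke non-nilpotence of $A$ --- but it diverges in two respects. First, to kill the scalar the paper derives $\al A=[A,D_1]$ and uses traces ($\al\,\tr(A^n)=\tr([A,D_1]A^{n-1})=0$ while $\tr(A^n)\neq0$ for some $n$ since $A$ is real and non-nilpotent); your route via $[f,A^n]=n\bar\gamma A^n$, the minimal polynomial and partial fractions is an equally valid, purely algebraic substitute. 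Second --- and this is where your version is actually tighter --- you retain the possible $\bar e$-component $b(u)\bar e$ of $D(u)$ and the possible $\G$-component $w$ of $D(e)$, and you eliminate $w$ through the $\G$-component of $D[u,v]$ together with the observation that $\mathrm{Im}(A^2)\not\subset\R w$ when $Aw=0$. The paper's displayed decomposition drops the term $\langle Du,e\rangle\bar e$ at the outset; by skew-symmetry $\langle Du,e\rangle=-\langle u,De\rangle$, which is only known to vanish once one already knows that $D(e)$ has no component in $\ker A\subset\G$ --- precisely what is being proved. Your preliminary localization $D(e)\in Z(\g)\cap[\g,\g]=\R e\oplus(\ker A\cap\mathrm{Im}A)$ and the subsequent elimination of $w$ close that small circularity cleanly, at the cost of a slightly longer computation. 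The deduction of the ``in particular'' clause is the same in both arguments.
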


\begin{proof}Let $D$ be a skew-symmetric derivation of  $\de_A(\G)\oplus\h$.  We have $Z(\de_A(\G)\oplus\h)=\R e\oplus \ker A\oplus \h $. Then $D(e)=\al e+z$ where $z\in\ker A\oplus\h$. For any $u\in\G$, 
	$$Du=\langle Du,\bar{e}\rangle e+D_1u+\langle Du,e\rangle\bar{e}+D_2u=\langle Du,\bar{e}\rangle e+D_1u+D_2u,$$ where $D_1u\in\G$ 	and $D_2u\in\h$. Note that $D_1:\G\too\G$ is skew-symmetric with respect to $\prs_\G$. We have
	\begin{align*}
		D[\bar{e},u]&=DAu=\langle DAu,\bar{e}\rangle e+D_1Au+D_2Au\\
		&=[D\bar{e},u]+[\bar{e},Du]\\
		&=\langle D\bar{e},e\rangle [\bar{e},u]+[D_1\bar{e},u]+AD_1u\\
		&=-\al Au+\langle D_1\bar{e},u\rangle_\G e+AD_1u.
	\end{align*}So
	$ \al A=[A,D_1].$
	On the other hand, for any $u,v\in\G$,
	\begin{align*}
		D[u,v]&=\langle Au,v\rangle_\G D(e)\\
		&=[Du,v]+[u,Dv]\\
		&=\left(\langle AD_1u,v\rangle_\G+\langle u,AD_1v\rangle_\G\right) e\\
		&=\langle[A,D_1]u,v\rangle e\\
		&=\al \langle Au,v\rangle e.
	\end{align*}So $D(e)=\al e$. If $A$ is not nilpotent, then there exists $n\geq2$ such that $\tr(A^n)\not=0$. But from the relation $\al A=[A,D_1]$ we can deduce that $\al\tr(A^n)=0$ and hence   
	$D(e)=0$ which completes the proof.
\end{proof}

We  apply Theorem \ref{de} to an important class of quadratic Lie algebras, namely, the oscillator Lie algebras.

The oscillator group named so by Streater in \cite{streater}, is a four-dimensional connected, simply connected
Lie group, whose Lie algebra (known as the oscillator algebra) coincides with the one generated
by the differential operators, acting on functions of one variable, associated to the harmonic
oscillator problem. The oscillator group has been generalized to any even dimension $2n \geq 4$, there are the only solvable Lie groups which carry an invariant Lorentzian metric.

For $n \in \N^*$ and $\la = (\la_1,\ldots,\la_n) \in \R^n$ with $0 < \la_1 \leq \cdots \leq \la_n$, the $\la$-oscillator group, denoted by $\mathrm{0sc}_\la$, is the Lie group which the underlying manifold $\R^{2n+2} = \R \times \R\times \mathbb{C}^n$ and product $$(t,s,z).(t',s',z') = \left(t+t',s+s'+\frac12 \sum_{j=1}^n Im\bar{z}_j exp(it\la_j)z_j',
\ldots, z_j+exp(it\la_j)z'_j, \ldots \right).$$

The Lie algebra of $\mathrm{0sc}_\la$, denoted by $\mathrm{osc}(\la)$, admits a basis $\B = \left\{e_{-1},e_0,e_i,\ch_i\right\}_{i=1,\ldots,n}$ where the brackets are given by
\begin{equation*}\label{bracket}
	[e_{-1},e_i] = \la_i \ch_i, \qquad[e_{-1},\ch_i] = -\la_ie_i, \qquad[e_i,\ch_i] = e_0,
\end{equation*}
the unspecified brackets are either zero or given by antisymmetry. 
We note that the center of $\mathrm{osc}(\la)$ is ${Z}(\mathrm{osc}(\la)) = \R e_0$ and $[\mathrm{osc}(\la), \mathrm{osc}(\la)] = \mathrm{span}_{\R}\left\{e_0,e_i,\ch_i\right\}_{i=1,\ldots,n}.$

For $x=x_{-1}e_{-1}+x_0e_0+\sum_{i=1}^n(x_i e_i+y_i \ch_i)$, the quadratic metric on $\mathrm{osc}(\la)$ is given by
\[ \langle x,x\rangle=2x_{-1}x_0+\sum_{i=1}^n\frac1{\la_i}(x_i^2+y_i^2). \]
Hence $\mathrm{osc}(\la)$ is the double extension of  $\R^{2{n}}$ by mean of $A=\mathrm{Diag}(A_{\al_1},\ldots,A_{\al_n})$ where $A_{\la_i}=\left(\begin{matrix}
	0&-\la_i\\ \la_i&0
\end{matrix}  \right).$ So as a corollary of Theorem \ref{de} we get the following result.

\begin{co}\label{osc}Let $(\h,\prs_\h)$ be an abelian quadratic Lie algebra and $\mathrm{osc}(\la)\times\h$ endowed with the  quadratic metric product. If $D$ is a  skew-symmetric derivation of $\mathrm{osc}(\la)\times\h$ then $D(e_{0})=0$. In particular, there is no $k$-symplectic structure on $\mathrm{osc}(\la)\times\h$.
	
\end{co}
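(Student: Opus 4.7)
The plan is to recognize that this is essentially a direct application of Theorem \ref{de}. The setup before the corollary already identifies $\mathrm{osc}(\la)$ with the double extension $\de_A(\R^{2n})$ of the abelian quadratic Lie algebra $\R^{2n}$ (equipped with the metric determined by the coefficients $1/\la_i$) by the skew-symmetric endomorphism $A=\mathrm{Diag}(A_{\la_1},\ldots,A_{\la_n})$ with $A_{\la_i}=\bigl(\begin{smallmatrix} 0 & -\la_i \\ \la_i & 0\end{smallmatrix}\bigr)$, and under this identification the central element $e_0$ of $\mathrm{osc}(\la)$ corresponds precisely to the distinguished element $e$ of $\de_A(\R^{2n})$. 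So the main content of the first assertion will be the hypothesis check in Theorem \ref{de}.

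First I would verify the non-nilpotency of $A$. Since the blocks $A_{\la_i}$ are rotations with $\la_i>0$, the eigenvalues of $A$ on the complexification are the purely imaginary numbers $\pm i\la_1,\ldots,\pm i\la_n$, all nonzero. In particular, $\mathrm{tr}(A^2)=-2\sum_{i=1}^n\la_i^2\neq 0$, so $A$ is not nilpotent and the hypothesis of Theorem \ref{de} is satisfied. Applying that theorem to $\de_A(\R^{2n})\oplus\h=\mathrm{osc}(\la)\times\h$ yields $D(e_0)=D(e)=0$ for any skew-symmetric derivation $D$ of the product quadratic Lie algebra.

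For the second assertion I would argue by contradiction: suppose $(\h',\theta_1,\ldots,\theta_k)$ were a $k$-symplectic structure on $\mathrm{osc}(\la)\times\h$. As explained in the discussion preceding Theorem \ref{main}, the quadratic metric provides skew-symmetric derivations $D_1,\ldots,D_k$ with $\theta_\al(u,v)=\langle D_\al u,v\rangle$ and with $\bigcap_{\al=1}^k \ker D_\al=\{0\}$. The first part of the corollary forces $D_\al(e_0)=0$ for every $\al$, so the nonzero element $e_0$ lies in $\bigcap_\al \ker D_\al$, contradicting nondegeneracy.

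I don't anticipate a genuine obstacle; the only nontrivial point is the identification of $\mathrm{osc}(\la)$ as a double extension (already supplied in the text) and the routine computation of $\mathrm{tr}(A^2)$ that rules out nilpotency. Everything else is a transcription of Theorem \ref{de} and the standard translation between $k$-symplectic structures on a quadratic Lie algebra and families of skew-symmetric derivations with trivial common kernel.
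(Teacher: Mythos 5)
Your proposal is correct and follows exactly the route the paper intends: the text preceding the corollary already exhibits $\mathrm{osc}(\la)$ as the double extension $\de_A(\R^{2n})$ with $e_0$ playing the role of $e$, and the corollary is deduced by checking that $A$ is not nilpotent (your computation $\tr(A^2)=-2\sum_i\la_i^2\neq 0$ is the right verification) and then invoking Theorem \ref{de}, whose conclusion already packages the non-existence of a $k$-symplectic structure via the common-kernel argument you spell out. No gaps.
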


We end this  section, by building a  $n$-symplectic structure on $\mathrm{sl}(n,\R)$. 

For any $i,j$, we denote by $E_{i,j}$ the $n$-matrix with 1 in the $i$-row and the $j$-column and $0$ elsewhere and we denote by $(E_{i,j}^*)$ the dual basis of the basis $(E_{i,j})$ of $\mathrm{gl}(n,\R)$.
We define $(\h,\theta_1,\ldots,\theta_n)$ by 
\[ \h=\left\{A\in\sln, \exists \la\in\R\;\mbox{such that}\; A(e_n)=\la e_n       \right\}\esp\theta_\al=(dE_{\al,n}^*)_{|\mathrm{sl}(n,\R)}.\]
\begin{theo}\label{sl} $(\mathrm{sl}(n,\R),\h,\theta_1,\ldots,\theta_n)$ is a $n$-symplectic Lie algebra.
	
\end{theo}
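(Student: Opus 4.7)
The plan is to verify in order the four conditions of a $k$-symplectic Lie algebra for $(\mathrm{sl}(n,\R),\h,\theta_1,\ldots,\theta_n)$: (i) $\h$ is a Lie subalgebra of dimension $n(n-1)$, (ii) each $\theta_\al$ is a $2$-cocycle, (iii) $\h$ is totally isotropic for every $\theta_\al$, and (iv) the family $(\theta_1,\ldots,\theta_n)$ is nondegenerate. I expect (i)--(iii) to be essentially built into the construction, while (iv) is the real content.

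First, observe that $\h$ is nothing but the stabilizer in $\mathrm{sl}(n,\R)$ of the line $\R e_n$ under the defining representation on $\R^n$; it is a Lie subalgebra because if $A(e_n)=\la_A e_n$ and $B(e_n)=\la_B e_n$ then $[A,B](e_n) = \la_B A(e_n) - \la_A B(e_n) = 0$. The defining condition $A_{i,n}=0$ for $i<n$ amounts to $n-1$ independent linear constraints, so $\dim\h = n^2 - n$, matching the required $\dim \mathrm{sl}(n,\R)/\h = n-1$. Next, $\theta_\al$ is the restriction to $\mathrm{sl}(n,\R)$ of the Chevalley--Eilenberg differential of the linear form $E_{\al,n}^*$ on $\mathrm{gl}(n,\R)$; since restriction to a subalgebra is a chain map, $\theta_\al = d(E_{\al,n}^*|_{\mathrm{sl}(n,\R)})$ is an exact, hence closed, $2$-form, with explicit formula $\theta_\al(A,B) = -[A,B]_{\al,n}$. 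Isotropy of $\h$ then follows from the computation above: $[A,B](e_n)=0$ for $A,B\in\h$ forces $[A,B]_{\al,n}=0$ for every $\al\in\{1,\ldots,n\}$.

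The genuine content lies in nondegeneracy. Suppose $A\in\bigcap_{\al=1}^n\ker\theta_\al$; then $[A,B]_{\al,n}=0$ for every $\al$ and every $B\in\mathrm{sl}(n,\R)$, which says that the entire $n$-th column of $[A,B]$ vanishes, i.e., $A(B\, e_n)=B(A\, e_n)$ for all $B\in\mathrm{sl}(n,\R)$. Writing $u=A\, e_n$, I test this identity against two families of trace-zero matrices. Taking $B=E_{i,n}$ with $i\neq n$ gives $B e_n=e_i$ and $B u=u_n e_i$, so $A e_i=u_n e_i$ for every $i\neq n$. Taking $B=E_{n,j}$ with $j\neq n$ gives $B e_n=0$ and $B u=u_j e_n$, so $u_j=0$ for $j\neq n$; hence $u=u_n e_n$, which in particular yields $A e_n=u_n e_n$. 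Combining, $A=u_n\,\mathrm{Id}$, and then $\mathrm{tr}(A)=0$ forces $u_n=0$, so $A=0$. The main obstacle is precisely this last step, where one must exhibit enough test elements $B\in\mathrm{sl}(n,\R)$ to pin down every column of $A$.
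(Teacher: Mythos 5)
Your proof is correct and follows essentially the same route as the paper: the subalgebra, dimension, closedness and isotropy checks are the same, and your nondegeneracy argument (rewriting $i_A\theta_\al=0$ as $[A,B]e_n=0$ and testing against the elementary matrices $E_{i,n}$ and $E_{n,j}$ to force $A=\la\,\mathrm{Id}$, then $\la=0$ from tracelessness) is the same computation the paper carries out with the full family $E_{i,j}$. No gaps.
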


\begin{proof} We denote by $(e_1,\ldots,e_n)$ the canonical basis of $\R^n$ and $(e_1^*,\ldots,e_n^*)$ its dual basis. We have $\dim\mathrm{sl}(n,\R)=(n-1)(n+1)$,  $\h$ is obviously a subalgebra of dimension $n(n-1)$ and the $\theta_i$ are obviously closed. Moreover, for any $A,B\in\h$, $[A,B](e_n)=0$ and hence, for any $\al=1,\ldots,n$, $$\theta_i(A,B)=-E_{\al,n}^*([A,B])=-
	\prec e_\al^*,[A,B](e_n)\succ=0.$$
	
Let $X\in\sln$ such that $i_X\theta_\al=0$ for any $\al=1,\ldots,n$. This equivalent to 
$$\prec e_\al^*,[X,E_{i,j}](e_n)\succ=0\quad i,j=1,\ldots,n,\al=1,\ldots,n.$$
We have
\begin{align*}
	\prec e_\al^*,[X,E_{i,j}](e_n)\succ&=
	\prec e_\al^*,\de_{jn}X(e_i)-X_{jn}e_i\succ\\
	&=\de_{jn}X_{\al i}-X_{jn}\de_{\al i}.
\end{align*}
For $j=n$ and $\al\not=i$, we get $X_{\al i}=0$. Otherwise, we get
\[ X_{\al\al}-X_{nn}=0. \]
So $X=X_{nn} I_n$. But $\tr(X)=0$ implies $X=0$. This completes the proof.\end{proof}

\section{k-symplectic structures on indecomposable quadratic Lie algebras of dimension $\leq6$}\label{section5}
In this section, we enumerate all indecomposable   quadratic Lie algebras of dimension $\leq6$ and for each of them we investigate if yes or not it admits a $k$-symplectic structure and we give such structure when it exists. Indecomposable quadratic Lie algebras of dimension $\leq6$, up to an automorphism, are given in \cite{ovando} and \cite{Baum} (see Tables \ref{1}-\ref{4}).

\paragraph{Dimension 3.} There are two quadratic non abelian Lie algebras in dimension 3, $\mathfrak{su}(2)$ which has no $k$-symplectic structure by virtue of Corollary \ref{co2} and $\mathrm{sl}(2,\R)$ which has a 2-symplectic structure given in Example \ref{exem2} and  Theorem \ref{sl}.

\paragraph{Dimension 4.}  In dimension 4, there are two non abelian quadratic Lie algebras, the oscillator Lie algebra $\mathrm{osc}(4,\la)$ and $\G_{1,4}$. Both are obtained by double extension from an abelian Lie algebra by mean of an invertible endomorphism and hence both of them have no $k$-symplectic structure by virtue of Theorem \ref{de}.

Note that , $\mathfrak{su}(2)\times\R$ and $\mathrm{sl}(2,\R)\times\R$ are the only decomposable non abelian four dimensional quadratic Lie algebras and both of theme have no $k$-symplectic structure by virtue of Proposition \ref{center}.

\paragraph{Dimension 5.} In dimension 5, there is only one indecomposable  non abelian  quadratic Lie algebra, namely, $\G_{1,5}$. Since $\G_{1,5}$ is nilpotent, it has no 5-symplectic structure by virtue of Proposition \ref{nilpotent}.

Note that the 5-dimensional decomposable quadratic Lie algebras $\mathrm{osc}(4,\la)\times\R$ and $\G_{1,4}\times\R$ have no $k$-symplectic structure by virtue of Theorem \ref{de}.

\paragraph{Dimension 6.} The list of indecomposable quadratic Lie algebras of dimension 6 is given in Table \ref{4}. The Lie algebras $\mathrm{osc}(6,\la_1,\la_2)$, $\mathfrak{l}_{2,\la}$ and $\mathfrak{n}_k(2,2)$ for $k=2,\ldots,6$ are obtained by
double extension from an abelian Lie algebra by mean of an invertible endomorphism and hence all of them have no $k$-symplectic structure by virtue of Theorem \ref{de}.

However, $\mathfrak{n}_1(2,2)$ is obtained by double extension by the mean of a nilpotent endomorphism. It is 2-step nilpotent and doesn't carry a 5-symplectic structure by virtue of Proposition \ref{nilpotent}. However, it carries a 2-symplectic structure.  For instance if we take $\h=\spa(e_1,e_2,e_4,e_3+e_6)$, $(\h,\theta_1,\theta_2)$ is a 2-symplectic structure on $\mathfrak{n}_1(2,2)$ where
$$\begin{cases}\theta_1=e_1^*\wedge e_3^*-e_1^*\wedge e_5^*-e_1^*\wedge e_6^*+e_3^*\wedge e_4^*+e_3^*\wedge e_6^*+e_4^*\wedge e_6^*,\\
	\theta_2=e_1^*\wedge e_3^*-e_1^*\wedge e_5^*-e_1^*\wedge e_6^*+2e_2^*\wedge e_5^*-e_3^*\wedge e_4^*+e_3^*\wedge e_6^*-e_4^*\wedge e_6^*.\end{cases}$$
Moreover, $(Z(\mathfrak{n}_1(2,2)),\theta)$ is a 1-symplectic structure on $\mathfrak{n}_1(2,2)$ where
\[ \theta=e_1^*\wedge e_3^* -e_1^*\wedge e_6^*+e_2^*\wedge e_5^*+e_2^*\wedge e_6^*+e_4^*\wedge e_5^*.\]

On the other hand, $\mathrm{so}(3,1)$ has no 1-symplectic structure by virtue of Proposition \ref{nosym}. However,
$(\h=\mathrm{span}(e_3,e_4,e_1+e_5,e_2+e_6),\theta_1,\theta_2)$ where 
$$\begin{cases}\theta_1=-e_1^*\wedge e_2^*-e_1^*\wedge e_4^*+e_2^*\wedge e_3^*+e_3^*\wedge e_6^*-e_4^*\wedge e_5^*+e_5^*\wedge e_6^*,\\
\theta_2=e_1^*\wedge e_3^*-e_1^*\wedge e_5^*+e_2^*\wedge e_4^*-e_2^*\wedge e_6^*+e_3^*\wedge e_5^*+e_4^*\wedge e_6^*\end{cases}$$
is a 2-symplectic structure on $\mathrm{so}(3,1)$. However, $\mathrm{so}(3,1)$ does not possess any five-dimensional Lie subalgebras \cite{ganam} and hence has no 5-symplectic structure.

Finally, we will show that $T^*\mathrm{sl}(2,\R)$ and 
$T^*\mathfrak{su}(2)$ have no $k$-symplectic structure. This needs some work based on the following two lemmas. But first let's recall the definitions of $\mathfrak{su}(2)$ and $\mathrm{sl}(2,\R)$ and set some notations.

\begin{enumerate}

\item The Lie algebra $\mathfrak{su}(2)=\left\{\left(\begin{matrix} iz&y+ix\\-y+xi&-zi\end{matrix}\right),x,y,z\in\R   \right\}.$ It has a basis $\B_0=(e_1,e_2,e_3)$
\[ e_1=\frac12\left(\begin{matrix}0&i\\i&0\end{matrix} \right),\; e_2=\frac12\left(\begin{matrix}0&1\\-1&0\end{matrix} \right)\esp e_3=\frac12\left(\begin{matrix}-i&0\\0&i\end{matrix} \right) \]
 where   
\begin{equation*} \label{eqsu2}[e_1,e_2]=e_3,\;[e_2,e_3]=e_1\esp [e_3,e_1]=e_2. 
\end{equation*} 
The group of automorphisms of $\mathfrak{su}(2)$ is generated by the three rotations
\[ \mathrm{Rot}_{xy}=\left(\begin{array}{ccc}\cos(a)&\sin(a)&0\\-\sin(a)& \cos(a)&0\\0&0&1 \end{array}   \right),
\mathrm{Rot}_{xz}=\left(\begin{array}{ccc}\cos(a)&0&\sin(a)\\0&1 &0\\
	-\sin(a)&0&\cos(a) \end{array}   \right),\mathrm{Rot}_{yz}=\left(\begin{array}{ccc}1&0&0\\0&\cos(a) &\sin(a)\\
	0&-\sin(a)&\cos(a) \end{array}   \right). \]

\item 
The Lie algebra $\mathrm{sl}(2,\R)$ has a   basis
$\left\{ e_1=\left(\begin{matrix}0&1\\0&0\end{matrix} \right), e_2=\left(\begin{matrix}0&0\\1&0\end{matrix} \right),e_3=\left(\begin{matrix}1&0\\0&-1\end{matrix} \right)\right\},$ where
\begin{equation*}  [e_3,e_1]=2e_1,\;[e_3,e_2]=-2e_2\esp [e_1,e_2]=e_3. \end{equation*}
In the basis $(X_1,X_2,X_3)$ with $X_1=\frac12(e_1+e_2)$, $X_2=\frac12e_3$ and $X_3=\frac12(e_1-e_2)$ the Lie brackets are given by
\begin{equation*} \label{eqsl}[X_1,X_2]=-X_3,\;[X_2,X_3]=X_1\esp [X_3,X_1]=X_2. \end{equation*}
The group of automorphisms of $\mathrm{sl}(2,\R)$ is generated by the automorphisms with the matrix given in the basis $(X_1,X_2,X_3)$ 
\[\mathrm{Rot}_{xy}=\left(\begin{array}{ccc}\cos(a)&\sin(a)&0\\-\sin(a)& \cos(a)&0\\0&0&1 \end{array}   \right),\; \mathrm{Boost}_{xz}=\left(\begin{array}{ccc}\cosh(a)&0&\sinh(a)\\0&1 &0\\
	\sinh(a)&0&\cosh(a) \end{array}\right),\mathrm{Boost}_{yz}=\left(\begin{array}{ccc}1&0&0\\0&\cosh(a) &\sinh(a)\\
	0&\sinh(a)&\cosh(a) \end{array}   \right).    \]
\item If $\G$ is a Lie algebra and $F$ is an automorphism of $\G$ we denote by 
$T^*F$ the automorphism of $T^*\G$ given by $T^*F=(F,(F^{-1})^*)$. This automorphism preserves the quadratic metric of $T^*\G$.

\end{enumerate}

\begin{Le}\label{le1} \begin{enumerate}
		\item Let $X$ be a non null vector in $\mathfrak{su}(2)$. Then there exists an automorphism of $\mathfrak{su}(2)$ which maps $X$ to $\al e_1$.
		\item Let $X$ be a non null vector in $\mathrm{sl}(2,\R)$ which is non colinear to $e_3$. Then there exists an automorphism of $\mathrm{sl}(2,\R)$ which maps $X$ to $ae_1+be_2$ with $(a,b)\not=(0,0)$.
		
	\end{enumerate}
	
\end{Le}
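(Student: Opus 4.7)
To prove Lemma \ref{le1}, I would work with the explicit generators of the automorphism group given in each case and reduce the statement to elementary linear algebra in the chosen basis.

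For part (1), the three generators $\mathrm{Rot}_{xy},\mathrm{Rot}_{xz},\mathrm{Rot}_{yz}$ act on $(e_1,e_2,e_3)$ as the standard coordinate rotations of $\R^3$, so together they contain $SO(3)$. The standard Euler-angle reduction handles any non-zero $X=xe_1+ye_2+ze_3$ in two steps: first apply $\mathrm{Rot}_{yz}$ to send $ye_2+ze_3$ to $\sqrt{y^2+z^2}\,e_2$, then apply $\mathrm{Rot}_{xy}$ to send $xe_1+\sqrt{y^2+z^2}\,e_2$ to $\sqrt{x^2+y^2+z^2}\,e_1$. Taking $\al=\sqrt{x^2+y^2+z^2}>0$ yields the claim.

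For part (2), I would translate the target shape into the basis $(X_1,X_2,X_3)$. Since $e_1=X_1+X_3$ and $e_2=X_1-X_3$, the vectors of the form $ae_1+be_2$ are exactly those with vanishing $X_2$-coordinate, and $(a,b)\neq(0,0)$ corresponds to the $(X_1,X_3)$-part being non-zero. Moreover $e_3=2X_2$, so the hypothesis ``non colinear to $e_3$'' says that, writing $X=xX_1+yX_2+zX_3$, one has $(x,z)\neq(0,0)$. The problem therefore reduces to killing the $X_2$-coefficient by an automorphism while keeping $(x,z)\neq(0,0)$.

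I would then proceed by two cases. If $x\neq 0$, a single rotation $\mathrm{Rot}_{xy}$ with angle $a$ chosen so that $\tan a=y/x$ cancels the $X_2$-coefficient and produces $\sqrt{x^2+y^2}\,X_1+zX_3$; here $\sqrt{x^2+y^2}>0$ guarantees the $(X_1,X_3)$-part is non-zero. If $x=0$ (hence $z\neq 0$), I would first apply $\mathrm{Boost}_{xz}$ with any non-zero parameter $a$: this sends $yX_2+zX_3$ to $z\sinh(a)\,X_1+yX_2+z\cosh(a)\,X_3$, whose $X_1$-coefficient $z\sinh(a)$ is non-zero, reducing to the previous case.

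I do not expect any serious obstacle: the whole argument is a direct computation in coordinates. The one point of care is that $\mathrm{Aut}(\mathrm{sl}(2,\R))$ contains boosts rather than a full $SO(3)$, so the naive Euler-angle reduction of part (1) is not available and must be replaced by the ``boost first, then rotate'' strategy of the second case. The hypothesis $(x,z)\neq(0,0)$ is exactly what guarantees that the preliminary boost produces a non-zero $X_1$-coefficient, after which the rotation step works uniformly.
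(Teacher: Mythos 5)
Your proof is correct and takes the same route as the paper, whose entire published argument is the single line ``It is straightforward by using the automorphisms given above'': your Euler-angle reduction for $\mathfrak{su}(2)$ and your boost-then-rotate reduction for $\mathrm{sl}(2,\R)$ are precisely the intended straightforward use of the listed generators. Note in passing that your argument for part (2) never invokes the non-nullity of $X$, so it in fact covers every non-zero vector not colinear to $e_3$, which is the form in which the lemma is later applied.
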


\begin{proof} It is straightforward by using the automorphisms given above. 
	\end{proof}

\begin{Le}\label{le2} There is no 5-dimensional subalgebra of $T^*\mathfrak{su}(2)$ and if $\h$ is a 4-dimensional degenerate subalgebra of $T^*\mathfrak{su}(2)$ then there exists an automorphism $F$ of $\mathfrak{su}(2)$ such that $T^*F(\h)=\R e_1\oplus \mathfrak{su}(2)^*$.
	
\end{Le}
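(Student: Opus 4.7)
\medskip
\noindent\textbf{Proof proposal.} The plan is to analyze subalgebras of $T^*\mathfrak{su}(2)$ via the canonical projection $\pi:T^*\mathfrak{su}(2)\too\mathfrak{su}(2)$, $u+\alpha\mapsto u$. From the bracket \eqref{br} one checks that $\pi$ is a Lie algebra homomorphism with kernel $\mathfrak{su}(2)^*$, which is an abelian ideal. The key structural input is that $\mathfrak{su}(2)$ is simple of rank one, so its only subalgebras are $\{0\}$, the one-dimensional Cartans, and $\mathfrak{su}(2)$ itself; and that the coadjoint representation $\ad^*$ on $\mathfrak{su}(2)^*$ is irreducible (being the dual of the irreducible adjoint representation of a simple compact Lie algebra).

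For any subalgebra $\h\subset T^*\mathfrak{su}(2)$, $\pi(\h)$ is a subalgebra of $\mathfrak{su}(2)$, so $\dim\pi(\h)\in\{0,1,3\}$. Moreover, $\h\cap\mathfrak{su}(2)^*$ is stable under $\ad_u^*$ for every $u\in\pi(\h)$: indeed, for $u\in\pi(\h)$ pick $\alpha\in\mathfrak{su}(2)^*$ with $u+\alpha\in\h$, and observe that for $\beta\in\h\cap\mathfrak{su}(2)^*$ one has $[u+\alpha,\beta]=\ad_u^*\beta$, which lies in $\h\cap\mathfrak{su}(2)^*$ since $\mathfrak{su}(2)^*$ is an ideal. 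When $\pi(\h)=\mathfrak{su}(2)$, irreducibility of $\ad^*$ forces $\h\cap\mathfrak{su}(2)^*\in\{\{0\},\mathfrak{su}(2)^*\}$.

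Now a dimension count finishes the proof. For the nonexistence of a 5-dimensional subalgebra $\h$, one has $\dim(\h\cap\mathfrak{su}(2)^*)\geq 2$. If $\pi(\h)$ is one-dimensional, then $\dim(\h\cap\mathfrak{su}(2)^*)=4>3$, impossible; if $\pi(\h)=\mathfrak{su}(2)$, then by irreducibility $\mathfrak{su}(2)^*\subset\h$, forcing $\dim\h=6$, again a contradiction. For a 4-dimensional subalgebra $\h$, one has $\dim(\h\cap\mathfrak{su}(2)^*)\geq 1$; the case $\pi(\h)=\mathfrak{su}(2)$ is ruled out by irreducibility (a one-dimensional invariant subspace of $\mathfrak{su}(2)^*$ would be needed but cannot exist), so $\pi(\h)$ is one-dimensional and $\h\cap\mathfrak{su}(2)^*=\mathfrak{su}(2)^*$. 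Thus $\h=\R X\oplus\mathfrak{su}(2)^*$ for some nonzero $X\in\mathfrak{su}(2)$.

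It then remains to apply Lemma \ref{le1}: there is an automorphism $F$ of $\mathfrak{su}(2)$ sending $X$ to $\alpha e_1$ for some $\alpha\neq 0$. The lifted automorphism $T^*F$ preserves $\mathfrak{su}(2)^*$, so
\[ T^*F(\h)=\R(\alpha e_1)\oplus\mathfrak{su}(2)^*=\R e_1\oplus\mathfrak{su}(2)^*, \]
as required. The degeneracy hypothesis is in fact automatic here, since a direct computation gives $\h^\perp=\{\beta\in\mathfrak{su}(2)^*:\beta(X)=0\}\subset\h$, so every such $\h$ is degenerate. The only genuine obstacle in the argument is the irreducibility step; once invoked, everything reduces to a dimensional bookkeeping and a single application of Lemma \ref{le1}.
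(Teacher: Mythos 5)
Your proof is correct, and it takes a genuinely different route from the paper's. The paper organizes the case analysis around $\dim(\h\cap\mathfrak{su}(2))$, i.e.\ the intersection with the (non-ideal) subalgebra $\mathfrak{su}(2)\subset T^*\mathfrak{su}(2)$; the hardest case, $\dim(\h\cap\mathfrak{su}(2))=1$, is then settled by an explicit computation: writing $\h^\perp=\mathrm{span}(x_1+e_2^*,x_2+e_3^*)$, imposing invariance under $\ad_{e_1}$ to solve for $x_1,x_2$, and finally using the degeneracy of $\h$ and the subalgebra condition to kill the remaining parameters. You instead exploit the short exact sequence $0\to\mathfrak{su}(2)^*\to T^*\mathfrak{su}(2)\xrightarrow{\pi}\mathfrak{su}(2)\to0$: since $\pi(\h)$ must be $\{0\}$, a line, or all of $\mathfrak{su}(2)$ (no $2$-dimensional subalgebras), and $\h\cap\mathfrak{su}(2)^*$ is a $\pi(\h)$-submodule of the irreducible coadjoint module, dimension counting does all the work. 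This buys a shorter, computation-free argument, and as a bonus it shows the degeneracy hypothesis is redundant: \emph{every} $4$-dimensional subalgebra is of the form $\R X\oplus\mathfrak{su}(2)^*$ and is automatically degenerate. The price is reliance on irreducibility of $\ad^*$ and on the absence of $2$-dimensional subalgebras of $\mathfrak{su}(2)$ (the latter is also used by the paper, citing \cite[Proposition 5.2]{bou}); note that your method does not transfer wholesale to $T^*\mathrm{sl}(2,\R)$ (Lemma \ref{le3}), where the Borel subalgebra produces additional cases, which is presumably why the authors chose the uniform, more computational treatment. The only cosmetic gaps are that you do not explicitly dispose of $\pi(\h)=\{0\}$ (which forces $\dim\h\le3$, so it is immediate) and that in the $5$-dimensional case with $\pi(\h)=\mathfrak{su}(2)$ the cleanest statement is that $\dim(\h\cap\mathfrak{su}(2)^*)=2$ contradicts irreducibility directly.
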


\begin{proof} Let $\h$ be a 5-dimensional Lie subalgebra of $T^*\mathfrak{su}(2)$. Then $\dim\h\cap \mathfrak{su}(2)\geq2$. If $\dim\h\cap \mathfrak{su}(2)=2$ then $\h\cap \mathfrak{su}(2)$ is a subalgebra of $\mathfrak{su}(2)$.  But $\mathfrak{su}(2)$ has no 2-dimensional Lie algebra (see \cite[Proposition 5.2]{bou}).   If $\dim\h\cap \mathfrak{su}(2)=3$ then $\mathfrak{su}(2)\subset\h$ and $\h^\perp\subset \mathfrak{su}(2)^\perp=\mathfrak{su}(2)$.  But $[\h,\h^\perp]\subset \h^\perp$ an hence $\h^\perp$ is an ideal in $\mathfrak{su}(2)$ which is impossible.

	   Let $\h$ be a degenerate   4-dimensional Lie subalgebra of $T^*\mathfrak{su}(2)$.  Then we have three possibilities $\dim\h\cap\mathfrak{su}(2)=1,2$ or 3.
	
	$\bullet$ If $\dim\h\cap\mathfrak{su}(2)=3$ then $\h^\perp\subset \mathfrak{su}(2)$ and hence $\h^\perp$ is an ideal of $\mathfrak{su}(2)$ which is impossible.
	
	$\bullet$ If $\dim\h\cap\mathfrak{su}(2)=2$
	then $\h\cap\mathfrak{su}(2)$ is a 2-dimensional subalgebra of $\mathfrak{su}(2)$. But $\mathfrak{su}(2)$ has no 2-dimensional Lie algebra (see \cite[Proposition 5.2]{bou})
	
	$\bullet$ If $\dim\h\cap\mathfrak{su}(2)=1$ then, according to Lemma \ref{le1}, we can transform $\h$ by $T^*F$ where $F$ is an automorphism of $\mathfrak{su}(2)$ and choose
	$e_1$ as a generator of $\h\cap\mathfrak{su}(2)$. On the other hand,
	\[ \dim(\h^\perp\cap\mathfrak{su}(2))=6-\dim(\h+\mathfrak{su}(2))=6-4-3+1=0. \]Thus
	$\h^\perp\cap\mathfrak{su}(2)=\{0\}$.  There exists $\al_1\not=0,\al_2\not=0$  and $y_1,y_2\in\mathfrak{su}(2)$ 
	such that 
	\[ \h^\perp=\mathrm{span}\left(y_1+\al_1,y_2+\al_2\right). \]
	We must have $\{\al_1,\al_2\}$  linearly independent otherwise, we can find a non null element of $\h^\perp$ in $\mathfrak{su}(2)$. We have $\al_1(e_1)=\al_2(e_1)=0$. So $\mathrm{span}(\al_1,\al_2)=\mathrm{span}(e_2^*,
	e_3^*)$ and hence there exists $x_1,x_2\in\mathfrak{su}(2)$ such that 
	\[ \h^\perp=\mathrm{span}\left(x_1+e_2^*,x_2+e_3^*\right). \]
	 Now the fact that $\ad_{e_1}$ leaves $\h^\perp$ invariant implies that
	\[\begin{cases}
		[e_1,x_1]+[e_1,e_2^*]=[e_1,x_1]+e_3^*=\mu_1(x_1+e_2^*)+\nu_1(x_2+e_3^*),\\
		[e_1,x_2]+[e_1,e_3^*]=[e_1,x_2]-e_2^*=\mu_2(x_1+e_2^*)+\nu_2(x_2+e_3^*),\\
	\end{cases}  \]So
	\[ [e_1,x_1]=x_2\esp [e_1,x_2]=-x_1. \]
	This is equivalent to $x_1=a e_2-be_3$ and $x_2=b e_2+ae_3$ and hence
	\[ \h^\perp=\{a e_2-be_3+ e_2^*,b e_2+ae_3+ e_3^*\} \]
	The restriction of the metric to $\h^\perp$ is  degenerate if and only if $a=0$. So $\h=\mathrm{span}(e_1,e_1^*,-be_3+ e_2^*,b e_2+ e_3^*)$. But $[e_1^*,-be_3+ e_2^*]=be_2^*\in\h$ if and only if $b=0$. This completes the proof.
	\end{proof}

\begin{pr}\label{su}
	There is non $k$-symplectic structure on $T^*\mathfrak{su}(2)$.
\end{pr}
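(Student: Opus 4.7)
The plan is to exploit the dimension constraint $\dim T^*\mathfrak{su}(2)=6=n(k+1)$, which restricts $k$ to $\{1,2,5\}$, and to dispose of each case in turn.

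For $k=5$, a $k$-symplectic structure requires a $5$-dimensional subalgebra, which Lemma \ref{le2} shows does not exist in $T^*\mathfrak{su}(2)$. For $k=1$, the structure provides a symplectic form, so Proposition \ref{nosym} would force $T^*\mathfrak{su}(2)$ to be nilpotent; but this is impossible because $\mathfrak{su}(2)^*$ is an ideal whose quotient is the simple Lie algebra $\mathfrak{su}(2)$.

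The substantive case is $k=2$. Theorem \ref{h2} gives $H^2(T^*\mathfrak{su}(2))=0$, so by Corollary \ref{co1} the $4$-dimensional isotropic subalgebra $\h$ must be degenerate; Lemma \ref{le2} then lets me assume, after applying a metric-preserving automorphism $T^*F$, that $\h=\R e_1\oplus\mathfrak{su}(2)^*$, and a direct computation from \eqref{met} yields $\h^\perp=\spa(e_2^*,e_3^*)$. Since $H^2=0$, each defining derivation is inner: $D_i=\ad_{x_i+\al_i}$ for $i=1,2$. The condition $D_i(e_1)\in\h^\perp$ forces the $\mathfrak{su}(2)$-component $[x_i,e_1]$ of $D_i(e_1)=[x_i,e_1]-\ad_{e_1}^*\al_i$ to vanish, pinning $x_i$ to the centralizer of $e_1$ in $\mathfrak{su}(2)$, which is $\R e_1$. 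So $x_i=\la_i e_1$ for some $\la_i\in\R$, while $\al_i\in\mathfrak{su}(2)^*$ remains free.

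The key observation is that every such admissible derivation annihilates $e_1^*$: using \eqref{br},
\[D_i(e_1^*)=[\la_i e_1+\al_i,e_1^*]=\la_i\,\ad_{e_1}^*e_1^*,\]
and $\ad_{e_1}^*e_1^*=0$ since $e_1^*([e_1,e_j])=0$ for $j=1,2,3$ (using $[e_1,e_2]=e_3$ and $[e_1,e_3]=-e_2$). Consequently $e_1^*\in\ker D_1\cap\ker D_2$, violating the nondegeneracy requirement $\ker D_1\cap\ker D_2=\{0\}$. The main obstacle is this $k=2$ case: pinning down $\h$ via Lemma \ref{le2}, exploiting $H^2=0$ to reduce to inner derivations, and spotting the persistent common kernel vector $e_1^*$; the other cases fall out almost immediately from the results already established.
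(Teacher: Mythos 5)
Your proof is correct and follows essentially the same route as the paper: eliminate $k=1$ via Proposition \ref{nosym} and $k=5$ via Lemma \ref{le2}, then for $k=2$ normalize $\h=\R e_1\oplus\mathfrak{su}(2)^*$ using Corollary \ref{co1}, Theorem \ref{h2} and Lemma \ref{le2}, reduce to inner derivations, and exhibit $e_1^*$ as a common kernel vector. The only cosmetic difference is that you pin down $X=x+\al$ from the condition $D(e_1)\in\h^\perp$ (forcing $x$ into the centralizer $\R e_1$), whereas the paper uses $D(e_2^*),D(e_3^*)\in\h^\perp$; both immediately give $D(e_1^*)=0$.
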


\begin{proof}
	According to Proposition \ref{nosym} and Lemma \ref{le2}, $T^*\mathfrak{su}(2)$ has no $k$-symplectic structure with $k=1$ or $k=5$.
	
	Let $(\h,D_1,D_2)$ be a 2-symplectic structure on 
	$T^*\mathfrak{su}(2)$ where $\h$ is a 4-dimensional subalgebra and $D_1,D_2$ two skew-symmetric derivations satisfying $D_i(\h)\subset\h^\perp$ and $\ker D_1\cap\ker D_2=\{0\}$. By virtue of Corollary \ref{co1} and Theorem \ref{h2}, $\h$ must be degenerate, and
	according to Lemma \ref{le2}, we can suppose that
	 $\h=\mathrm{span}(e_1,e_1^*,e_2^*,e_3^*)$ and $\h^\perp=\mathrm{span}(e_2^*,e_3^*)$. A skew-symmetric derivation $D$ of $T^*\mathfrak{su}(2)$ is inner (see Theorem \ref{h2}), i.e., $D=\ad_X$ where $X$ has coordinate $(x_1,\ldots,x_6)$ and we have
	 \[ D(e_1^*)=x_3e_2^*-x_2e_3^*,\;D(e_2^*)=-x_3e_1^*+x_1e_3^*\esp D(e_3^*)=x_2e_1^*+x_1e_2^*. \]
	 So $D(e_i^*)\subset\h^\perp$ for $i=1,\ldots,3$ if and only if $D(e_1^*)=0$. Thus $\ker D_1\cap\ker D_2\not=\{0\}$ which completes the proof.
	\end{proof}

\begin{Le}\label{le3}\begin{enumerate}
		\item Let $\h$ be a  5-dimensional degenerate subalgebra of $T^*\mathrm{sl}(\R,2)$ then there exists an automorphism $F$ of $\mathrm{sl}(\R,2)$ such that $T^*F(\h)=\mathrm{span}(e_1,e_3)\oplus \mathrm{sl}(\R,2)^*$.
		\item If $\h$ is a 4-dimensional degenerate subalgebra of $T^*\mathrm{sl}(\R,2)$ then there exists an automorphism $F$ of $\mathrm{sl}(\R,2)$ such that  $T^*F(\h)$ has one of the following forms
		\[\R e_1\oplus \mathrm{sl}(\R,2)^*,\;\R e_3\oplus \mathrm{sl}(\R,2)^*,\; \mathrm{span}\{e_1,e_3,e_1^*,e_2^*    \} 
		\quad\mbox{or}\quad \mathrm{span}\{e_1,e_3,e_2^*,e_3^*    \}.\]
		
	\end{enumerate}

	\end{Le}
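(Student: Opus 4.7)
The argument hinges on three structural facts about $T^*\mathrm{sl}(2,\R)=\mathrm{sl}(2,\R)\ltimes_{\ad^*}\mathrm{sl}(2,\R)^*$: since $\mathrm{sl}(2,\R)^*$ is an abelian ideal, the projection $\pi_1:T^*\mathrm{sl}(2,\R)\to\mathrm{sl}(2,\R)$ is a Lie algebra homomorphism; the coadjoint representation of $\mathrm{sl}(2,\R)$ on $\mathrm{sl}(2,\R)^*$ is irreducible by simplicity; and every $F\in\mathrm{Aut}(\mathrm{sl}(2,\R))$ lifts to the metric-preserving automorphism $T^*F$ of $T^*\mathrm{sl}(2,\R)$. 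For any Lie subalgebra $\h$, the intersection $\h\cap\mathrm{sl}(2,\R)^*$ is forced to be $\ad^*$-invariant under every $u\in\pi_1(\h)$, a condition I exploit throughout.

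For part $(1)$, I would mirror the proof of Lemma \ref{le2}, now invoking the two-dimensional Borel subalgebra of $\mathrm{sl}(2,\R)$. Dimension counting gives $\dim(\h\cap\mathrm{sl}(2,\R))\in\{2,3\}$; the value $3$ is excluded because $\h^\perp\subset\mathrm{sl}(2,\R)^\perp=\mathrm{sl}(2,\R)$ would then be a nonzero one-dimensional $\ad_\h$-stable ideal of the simple algebra $\mathrm{sl}(2,\R)$. Hence $\h\cap\mathrm{sl}(2,\R)$ is a Borel, conjugate to $\mathrm{span}(e_1,e_3)$ by an inner automorphism; after applying the corresponding $T^*F$ we may assume equality. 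Then $\pi_1(\h)$ is a Lie subalgebra of $\mathrm{sl}(2,\R)$ containing $\mathrm{span}(e_1,e_3)$, so equals either $\mathrm{span}(e_1,e_3)$ or $\mathrm{sl}(2,\R)$; the second case would make $\h\cap\mathrm{sl}(2,\R)^*$ a two-dimensional $\mathrm{sl}(2,\R)$-submodule of the irreducible coadjoint module, impossible. Thus $\mathrm{sl}(2,\R)^*\subset\h$ and $\h=\mathrm{span}(e_1,e_3)\oplus\mathrm{sl}(2,\R)^*$.

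For part $(2)$, I would stratify by $p=\dim(\h\cap\mathrm{sl}(2,\R))\in\{1,2,3\}$. The value $p=3$ is excluded because $\h/\mathrm{sl}(2,\R)\subset\mathrm{sl}(2,\R)^*$ would be an $\ad^*$-invariant line, contradicting irreducibility. For $p=2$ the part $(1)$ argument normalizes $\h\cap\mathrm{sl}(2,\R)=\mathrm{span}(e_1,e_3)$, and the subalgebra condition forces $V:=\h\cap\mathrm{sl}(2,\R)^*$ to be two-dimensional and stable under $\ad_{e_1}^*$ and $\ad_{e_3}^*$. A weight computation---$\ad_{e_3}^*$ acts diagonally on $e_1^*,e_2^*,e_3^*$ with weights $-2,2,0$, and $\ad_{e_1}^*$ acts as the raising operator $e_1^*\mapsto 2e_3^*,\ e_3^*\mapsto -e_2^*,\ e_2^*\mapsto 0$---shows $V=\mathrm{span}(e_2^*,e_3^*)$ is the unique such plane, giving $\h=\mathrm{span}(e_1,e_3,e_2^*,e_3^*)$. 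For $p=1$ with $\pi_1(\h)=\R X$ one necessarily has $\h=\R X\oplus\mathrm{sl}(2,\R)^*$, and part $(2)$ of Lemma \ref{le1} together with the diagonal scaling automorphisms reduces $X$ (up to scalar) to $e_1$ or $e_3$, giving the first two listed forms. The remaining subcase $p=1$, $\dim\pi_1(\h)=2$ is handled by the same weight argument to pin down $V=\mathrm{span}(e_2^*,e_3^*)$, followed by absorption of the lift parameter via scaling, producing the last listed form.

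\textbf{Main obstacle.} The most delicate case is $p=1$ with $\dim\pi_1(\h)=2$: after normalizing $\pi_1(\h)=\mathrm{span}(e_1,e_3)$ and $V=\mathrm{span}(e_2^*,e_3^*)$ one obtains $\h=\R X+\R(Y+ae_1^*)+V$ depending on a scalar $a$ that can be rescaled but not killed by inner automorphisms, so matching each resulting orbit to exactly one of the four listed normal forms requires careful use of both inner and outer automorphisms of $\mathrm{sl}(2,\R)$ (in particular the Weyl element swapping root spaces). Ensuring exhaustiveness across the nilpotent, hyperbolic and elliptic possibilities for $X$ allowed by Lemma \ref{le1} is where the bookkeeping concentrates.
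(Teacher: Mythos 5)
Your framework---the projection $\pi_1:T^*\mathrm{sl}(2,\R)\to\mathrm{sl}(2,\R)$, the $\ad^*$-invariance of $\h\cap\mathrm{sl}(2,\R)^*$ under $\pi_1(\h)$, and irreducibility of the coadjoint module---correctly and rather more cleanly than the paper disposes of part (1) and of the cases $\dim(\h\cap\mathrm{sl}(2,\R))\in\{2,3\}$ of part (2). Indeed your weight computation is sharper than the paper's: since $\ad_{e_1}^*$ is a single nilpotent Jordan block, $\mathrm{span}(e_2^*,e_3^*)$ is the \emph{only} invariant plane, so the listed form $\mathrm{span}\{e_1,e_3,e_1^*,e_2^*\}$ never occurs (it is not even a subalgebra, as $[e_1,e_1^*]=2e_3^*$). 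The genuine gap is the case $p=1$, precisely where you locate the ``main obstacle,'' and the resolutions you propose there do not exist. First, when $\pi_1(\h)=\R X$ you claim $X$ reduces to $e_1$ or $e_3$; but every automorphism of $\mathrm{sl}(2,\R)$ preserves the Killing form, so the elliptic orbit (negative norm) is disjoint from the nilpotent orbit of $e_1$ and the hyperbolic orbit of $\R e_3$. Taking $X=e_1-e_2$, the space $\h=\R(e_1-e_2)\oplus\mathrm{sl}(2,\R)^*$ is a $4$-dimensional subalgebra, it is degenerate since $\h^\perp=\{\al\in\mathrm{sl}(2,\R)^*:\al(e_1-e_2)=0\}\subset\h$, and since $T^*F$ preserves both summands it is not equivalent to any of the four listed forms. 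Second, in the subcase $\dim\pi_1(\h)=2$ you claim the lift parameter is absorbed by scaling so as to land on $\mathrm{span}\{e_1,e_3,e_2^*,e_3^*\}$; this is impossible because $T^*F$ preserves $\dim(\h\cap\mathrm{sl}(2,\R))$, which equals $1$ here and $2$ for that normal form. Concretely, $\h_a=\mathrm{span}(e_1,\,e_3+ae_1^*,\,e_2^*,\,e_3^*)$ is a degenerate subalgebra for every $a$, and the hyperbolic one-parameter group fixing $e_3$ sends $a$ to $\lambda^{-1}a$, so $a\neq0$ cannot be removed.

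These are therefore not bookkeeping issues but additional $T^*F$-conjugacy classes of degenerate $4$-dimensional subalgebras that the four listed normal forms do not cover, so the reduction you outline cannot be completed as stated. (You are in good company: the paper's own treatment of $p=1$ imposes $\ad_{e_1}$-invariance of $\h^\perp$ where only $\ad_{e_1+be_2}$-invariance is available, and later discards the parameter $\beta$ in $x_2=\frac{\alpha}{2}e_3+\beta e_1$, which is exactly where these classes are lost.) To salvage the argument for the subsequent non-existence proposition you would need either to enlarge the list of normal forms and treat the new ones, or to show directly that $\R(e_1-e_2)\oplus\mathrm{sl}(2,\R)^*$ and the family $\h_a$ cannot carry the derivations required by a $2$-symplectic structure.
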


\begin{proof}\begin{enumerate}\item Let $\h$ be a degenerate 5-dimensional Lie subalgebra of $T^*\mathrm{sl}(\R,2)$. Then $\dim\h\cap \mathrm{sl}(\R,2)\geq2$.    
		
		If $\dim\h\cap \mathrm{sl}(\R,2)=3$ then $\mathrm{sl}(\R,2)\subset\h$ and $\h^\perp\subset \mathrm{sl}(\R,2)^\perp=\mathrm{sl}(\R,2)$.  But $[\h,\h^\perp]\subset \h^\perp$ an hence $\h^\perp$ is an ideal in $\mathrm{sl}(\R,2)$ which is impossible.
	
If $\dim\h\cap \mathrm{sl}(\R,2)=2$ then $\h\cap \mathrm{sl}(\R,2)$ is a subalgebra of $\mathrm{sl}(\R,2)$.  According to  \cite[Proposition 5.2]{bou}), up to an automorphism, $\h\cap \mathrm{sl}(\R,2)=\mathrm{span}(e_1,e_3)$. So $\h^\perp+\mathrm{sl}(\R,2)=\mathrm{sl}(\R,2)\oplus\R e_2^*$. On the other hand, $\dim\h^\perp\cap \mathrm{sl}(\R,2)=0$ and hence $\h^\perp =\R(x+\al)$ with $\al\not=0$. We have $\al(e_1)=\al(e_3)=0$ so we can choose $\al=e_2^*$. Since $\h^\perp\subset\h$ ($\h$ being degenerate), we must have also $\al(x)=0$. Thus $x=ae_1+be_3$. But $[\h,\h^\perp]\subset\h^\perp$ so
 $[e_1,x]=mx\esp [e_3,x]=nx$ and hence $x=me_1$. Thus
 \[ \h=\mathrm{span}(e_1,e_3,-me_2+e_1^*,e_2^*,e_3^*). \]But $[e_3^*,-me_2+e_1^*]=me_1^*$ and $me_1^*\in\h$ if and only if $m=0$. Which complete the first part of the proof.

\item	     Let $\h$ be a    4-dimensional Lie degenerate subalgebra of $T^*\mathrm{sl}(\R,2)$.   We have three possibilities $\dim\h\cap\mathrm{sl}(\R,2)=1,2$ or 3. The same argument as in the proof of Lemma \ref{le2} shows that the case $\dim\h\cap\mathrm{sl}(\R,2)=3$ is impossible.
	 
	 $\bullet$ Suppose that  $\G_1=\h\cap\mathrm{sl}(\R,2)$ is a two dimensional subalgebra of $\mathrm{sl}(\R,2)$. According to \cite[Proposition 5.1]{bou},  we can suppose that $\G_1=\mathrm{span}(e_1,e_3)$. On the other hand, $\dim\h^\perp\cap\mathrm{sl}(\R,2)=6-\dim(\h+\dim\mathrm{sl}(\R,2))=1$. Denote by $\al$ a generator of $\h^\perp\cap\mathrm{sl}(\R,2)$. Then $\h\subset\al^\perp=\al^0\oplus \mathrm{sl}(\R,2)^*$ where $\al^0$ is the annihilator of $\al$. This implies that $\G_2=\h\cap \mathrm{sl}(\R,2)^*$ is 2-dimensional. So finally, $\h=\G_1\oplus\G_2$ and $\G_2$ is invariant by $\ad_{e_1}^*$ and $\ad_{e_3}^*$. In the dual basis, we have
	 \[ \ad_{e_1}^*=\left[\begin{array}{ccc}
	 	0 & 0 & 0 
	 	\\
	 	0 & 0 & 1 
	 	\\
	 	-2 & 0 & 0 
	 \end{array}\right]\esp 
 \ad_{e_3}^*=\left[\begin{array}{ccc}
 	2 & 0 & 0 
 	\\
 	0 & -2 & 
 	\\
 	0 & 0 & 0 
 \end{array}\right].
	  \]We have $\ad_{e_1}^*$  leaves invariant $\mathrm{span}(e_2^*,e_3^*)$ and $\G_2$ so it leaves invariant their intersection and since 0 is the only real eigenvalue of $\ad_{e_1}^*$, we get that $e_2^*\in\G_2$. We have also that
	   $\ad_{e_3}^*$  leaves invariant $\mathrm{span}(e_1^*,e_3^*)$ and $\G_2$ so it leaves invariant their intersection and hence either $e_1^*\in\G_2$ or $e_3^*\in\G_2$. Finally,
	  \[ \h=\mathrm{span}\{e_1,e_3,e_1^*,e_2^*    \} 
	  \quad\mbox{or}\quad \h=\mathrm{span}\{e_1,e_3,e_2^*,e_3^*    \}.\]

$\bullet$ $\dim\h\cap\mathrm{sl}(\R,2)=1$. According to Lemma \ref{le1}, we can suppose that a generator of 	 $\h\cap\mathrm{sl}(\R,2)$ is either $ae_1+be_2$ or $e_3$ with $(a,b)\not=0$.

Suppose that $a\not=0$ and hence we can choose  $e_1+be_2$ as a generator of $\h\cap\mathrm{sl}(\R,2)$. We must have $\h^\perp\cap\mathfrak{su}(2)=\{0\}$ and hence there exists $\al_1\not=0,\al_2\not=0$  and $x_1,x_2\in\mathfrak{su}(2)$ 
such that 
\[ \h^\perp=\mathrm{span}\left(x_1+\al_1,x_2+\al_2\right). \]
We must have $\{\al_1,\al_2\}$  linearly independent. We have $\al_1(e_1+be_2)=\al_2(e_1+be_2)=0$ and hence we can choose  $\al_1=-be_1^*+e_2^*$ and $\al_2=e_3^*$. Now the fact $\ad_{e_1}$ leaves $\h^\perp$ invariant implies that
\[\begin{cases}
	[e_1,x_1]+[e_1,e_2^*]-b[e_1,e_1^*]=[e_1,x_1]-2be_3^*=
	\mu_1(x_1+e_2^*-be_1^*)+\nu_1(x_2+e_3^*),\\
	[e_1,x_2]+[e_1,e_3^*]=[e_1,x_2]-e_2^*=\mu_2(x_1+e_2^*-be_1^*)+\nu_2(x_2+e_3^*),\\
\end{cases}  \]So $b=0$ and 
\[ [e_1,x_1]=0\esp [e_1,x_2]=-x_1. \]
This is equivalent to $x_1=\al e_1$ and $x_2=\frac{\al}2 e_3+\be e_1$ and hence
\[ \h^\perp=\{\al e_1+ e_2^*,\be e_1+\frac{\al}2 e_3+ e_3^*\} \]
Then 
\[ \h=\mathrm{span}\left(e_1,e_2^*,ae_2-e_1^*,\frac{a}2e_3-e_3^*  \right). \]
An one can see that $\h$ is a subalgebra if and only if $a=0$. Hence $\h=\R e_1\oplus\mathrm{sl}(2,\R)^*$ and $\h^\perp=\mathrm{span}(e_2^*,e_3^*)$.

Suppose that $e_3$ is a generator of $\h\cap\mathrm{sl}(\R,2)$. Then   $\h^\perp\cap\mathrm{sl}(\R,2)=\{0\}$ and  there exists $\al_1\not=0,\al_2\not=0$  and $y_1,y_2\in\mathfrak{su}(2)$ 
such that 
\[ \h^\perp=\mathrm{span}\left(x_1+\al_1,x_2+\al_2\right). \]
We must have $\{\al_1,\al_2\}$  linearly independent. We have $\al_1(e_3)=\al_2(e_3)=0$ and hence we can choose  $\al_1=e_1^*$ and $\al_2=e_2^*$. Now the fact $\ad_{e_3}$ leaves $\h^\perp$ invariant implies that
\[\begin{cases}
	[e_3,x_1]+[e_3,e_1^*]=[e_1,x_1]-2e_1^*=\mu_1(x_1+e_1^*)+\nu_1(x_2+e_2^*),\\
	[e_3,x_2]+[e_3,e_2^*]=[e_1,x_2]+2e_2^*=\mu_2(x_1+e_1^*)+\nu_2(x_2+e_2^*),\\
\end{cases}  \]So
\[ [e_3,x_1]=-2x_1\esp [e_3,x_2]=2x_2. \]
So $x_1=\al e_2$ and $x_2=\be e_1$. So
\[ \h^\perp=\{\al e_2+ e_1^*,\be e_1+ e_2^*\} \]
 Hence $\h=\R e_3\oplus\mathrm{sl}(2,\R)^*$ and $\h^\perp=\mathrm{span}(e_2^*,e_3^*)$.\qedhere

\end{enumerate}

\end{proof}

\begin{pr}
	There is non $k$-symplectic structure on $T^*\mathrm{sl}(\R,2)$.
\end{pr}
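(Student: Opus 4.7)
The plan is to follow the template established in Proposition~\ref{su}. The argument proceeds by excluding each admissible value of $k$ in turn; since $\dim T^*\mathrm{sl}(2,\R)=6$ and a $k$-symplectic Lie algebra satisfies $\dim\G=n(k+1)$, $\dim\h=nk$, the only admissible pairs are $(n,k)\in\{(3,1),(2,2),(1,5)\}$.

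For $k=1$: since $\mathrm{sl}(2,\R)$ is semisimple, $T^*\mathrm{sl}(2,\R)$ is not nilpotent, so Proposition~\ref{nosym} precludes any symplectic form; in particular, no 1-symplectic structure exists. For $k=2$ and $k=5$, Theorem~\ref{h2} gives $H^2(T^*\mathrm{sl}(2,\R))=0$, so Corollary~\ref{co1} forces the candidate subalgebra $\h$ to be degenerate with respect to the canonical metric, and every skew-symmetric derivation $D$ of $T^*\mathrm{sl}(2,\R)$ to be inner: $D=\ad_X$ for some $X$. By Lemma~\ref{le3}, the possibilities for $\h$ are classified up to a $T^*F$-automorphism: one 5-dimensional form for $k=5$, and four 4-dimensional forms for $k=2$.

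For each candidate I would compute $\h^\perp$ explicitly in the basis $(e_1,e_2,e_3,e_1^*,e_2^*,e_3^*)$ and then describe the linear subspace $\mathcal{X}=\{X:\ad_X(\h)\subset\h^\perp\}$ by expanding the conditions on each generator of $\h$ via the formulas $[u,\be]=\ad_u^*\be$ and $[\al,v]=-\ad_v^*\al$. The goal is to exhibit a nonzero vector in $\bigcap_{X\in\mathcal{X}}\ker(\ad_X)$, which precludes any choice of $D_1,\ldots,D_k$ from $\mathcal{X}$ whose kernels intersect trivially. For the 5-symplectic case $\h=\mathrm{span}(e_1,e_3)\oplus\mathrm{sl}(2,\R)^*$ with $\h^\perp=\R e_2^*$, the constraints should collapse $\mathcal{X}$ into $\mathrm{sl}(2,\R)^*$; since $\mathrm{sl}(2,\R)^*$ is abelian inside $T^*\mathrm{sl}(2,\R)$, one gets $\mathrm{sl}(2,\R)^*\subset\ker(\ad_X)$ for every valid $X$, a 3-dimensional common kernel. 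The two cases $\h=\R e_1\oplus\mathrm{sl}(2,\R)^*$ and $\h=\R e_3\oplus\mathrm{sl}(2,\R)^*$ are handled in the same spirit, with a single generator of $\mathrm{sl}(2,\R)^*$ (namely $e_2^*$, respectively $e_3^*$) surviving in every $\ker(\ad_X)$.

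The main technical obstacle will be the two \emph{mixed} 4-dimensional cases $\h=\mathrm{span}\{e_1,e_3,e_1^*,e_2^*\}$ and $\h=\mathrm{span}\{e_1,e_3,e_2^*,e_3^*\}$, where the subalgebra meets both $\mathrm{sl}(2,\R)$ and $\mathrm{sl}(2,\R)^*$ nontrivially and $\mathcal{X}$ is correspondingly larger. There one must identify the common zero by examining explicit matrices of $\ad_{e_1}$, $\ad_{e_3}$, and $\ad_{e_i^*}$ on the 6-dimensional ambient space and testing every linear combination in $\mathcal{X}$; the conclusion must rely on a structural feature such as a fixed covector annihilated by all these $\ad$-operators simultaneously. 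I expect the detailed bookkeeping needed to locate such a common null vector---rather than any new conceptual ingredient---to be the principal difficulty.
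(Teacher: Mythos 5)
Your plan coincides with the paper's own proof: the same reduction to $k\in\{1,2,5\}$, the same use of Proposition \ref{nosym} for $k=1$, and for $k=2,5$ the same combination of Theorem \ref{h2} (all skew-symmetric derivations are inner), Corollary \ref{co1} (forcing $\h$ degenerate) and Lemma \ref{le3}, followed by a case-by-case exhibition of a nonzero common kernel vector for all admissible $\ad_X$, which contradicts $\bigcap_\al\ker D_\al=\{0\}$. The outcomes you predict are exactly what the paper's explicit computations give ($\mathrm{sl}(2,\R)^*$ killed in the 5-symplectic case since the constraint forces $\ad_x$ to have rank $\leq 1$ and hence $x=0$; a single surviving covector in the cases $\R e_1\oplus\mathrm{sl}(2,\R)^*$ and $\R e_3\oplus\mathrm{sl}(2,\R)^*$; and a direct matrix computation for the two mixed 4-dimensional subalgebras), so only the routine bookkeeping you defer separates your sketch from the published argument.
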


\begin{proof} By virtue of Proposition \ref{nosym}, $T^*\mathrm{sl}(\R,2)$ has no 1-symplectic structure.
	
Let $\h$ be a degenerate 5-dimensional Lie subalgebra of 	$T^*\mathrm{sl}(\R,2)$ and $D$ a skew-symmetric derivation of $T^*\mathrm{sl}(\R,2)$ such that $D(\h)\subset\h^\perp$. According to Lemma \ref{le3}, $\h=\spa(e_2,e_3)\oplus\sln^*$ and $\h^\perp=\R e_1^*$. Moreover, according to Theorem \ref{h2}, $D=\ad_{x+\al}$. So we must have $\ad_x^*(\sln^*)\subset \R e_1^*$ this implies that $\ad_x$ in restriction to $\mathrm{sl}(\R,2)$ has rank $\leq1$ which implies that $\ad_x=0$. So $D$ vanishes in restriction to  $\sln^*$. This shows that $T^*\mathrm{sl}(\R,2)$ has no 5-symplectic structure.

Let $\h$ be a nondegenerate    4-dimensional Lie subalgebra of $T^*\mathrm{sl}(\R,2)$. According to Lemma \ref{le3}, we have four cases:

\begin{enumerate}
	\item $\h=\R e_1\oplus \mathrm{sl}(\R,2)^*$. Let $D=\ad_X$ be a skew-symmetric derivation of $T^*\mathrm{sl}(2,\R)$ with $X=[x_1,\ldots,x_6]$ then
	$D(e_1^*),D(e_3^*)\in\h^\perp$ if and only if $D(e_2^*)=0)$.

	\item $\h= \R e_3\oplus \mathrm{sl}(\R,2)^*$. Let $D=\ad_X$ be a skew-symmetric derivation of $T^*\mathrm{sl}(2,\R)$ with $X=[x_1,\ldots,x_6]$ then $D(e_1^*)\in\h^\perp$ if and only if $D(e_1^*)=0$
	
	\item $\h = \mathrm{span}\{e_1,e_3,e_1^*,e_2^*    \}$ or 
	 $\mathrm{span}\{e_1,e_3,e_2^*,e_3^*    \}$. Let $D$ be a skew-symmetric derivation of $T^*\mathrm{sl}(\R,2)$. Then $D=\ad_X$ where $X=(x_1,\ldots,x_6)$ and
	 \[ D(e_2)=\left[\begin{array}{cccccc}
	 	0 & -2 x_{3} & x_{1} & -x_{6} & 0 & 2 x_{5} 
	 \end{array}\right]
	 \esp D(e_1^*)=\left[\begin{array}{cccccc}
	 	0 & 0 & 0 & -2 x_{3} & 0 & 2 x_{1} 
	 \end{array}\right].
	 \]If $\h=\mathrm{span}\{e_1,e_3,e_1^*,e_2^*    \}$ then  $\h^\perp=\mathrm{span}(e_3,e_2^*)$ and $D(e_2)\in\h^\perp$ if and only if $D(e_2)=0$.
	 If $\h=\mathrm{span}\{e_1,e_3,e_2^*,e_3^*    \}$ then  $\h^\perp=\mathrm{span}(e_1,e_2^*)$ and $D(e_1^*)\in\h^\perp$ if and only if $D(e_1^*)=0$. 
\end{enumerate}
	This shows that $\h$ cannot be the subalgebra of a 2-symplectic structure and hence $T^*\sln$ has no 2-symplectic structure.
	\end{proof}

{\renewcommand*{\arraystretch}{1.4}
	\begin{center}	
		\begin{tabular}{|c|c|c|}
			\hline
			Lie algebra&The non vanishing bracket&The metric\\
			\hline
			$\mathfrak{su}(2)$&
			$[e_1,e_2]=e_3,[e_2,e_3]=e_1,[e_3,e_1]=e_2.$&$c\mathrm{id}$\\
			\hline
			$\mathrm{sl}(2,\R)$&
			$[e_1,e_2]=e_3,[e_3,e_1]=2e_1,[e_3,e_2]=-2e_2.$&$c\{(1,2)=1,(3,3)=2\}$\\
			\hline
		\end{tabular}
		\captionof{table}{Indecomposable quadratic Lie algebras of dimension 3\label{1}}
\end{center}}

{\renewcommand*{\arraystretch}{1.4}
	\begin{center}	
		\begin{tabular}{|c|c|c|}
			\hline
			Lie algebra&The non vanishing bracket&The metric\\
			\hline
			$\mathrm{os}(4,\la)$&
			$[e_2,e_3]=e_1,[e_4,e_2]=\la e_3,[e_4,e_3]=-\la e_2$&$\{(2,2)=(3,3)=\frac1{\la},(1,4)=1\}$\\
			\hline
			$\G_{1,4}$&$[e_4,e_2]=e_2,\;[e_4,e_3]=-e_3,\;[e_2,e_3]=e_1$&$\{(1,4)=(2,3)=1\}$\\
			\hline
		\end{tabular}
		\captionof{table}{Indecomposable non abelian quadratic Lie algebras of dimension 4\label{2}}
\end{center}}

{\renewcommand*{\arraystretch}{1.4}
	\begin{center}	
		\begin{tabular}{|c|c|c|}
			\hline
			Lie algebra&The non vanishing bracket&The metric\\
			\hline
			$\G_{1,5}$&$[e_2,e_3]=e_1,[e_3,e_4]=-e_1,[e_5,e_2]=e_3,$&$\{(1,5)=-(2,2)=(3,3)=(4,4)=1\}$\\
			&$[e_5,e_3]=e_2-e_4,[e_5,e_4]=e_3$&\\
			\hline
		\end{tabular}
		\captionof{table}{Indecomposable non abelian quadratic Lie algebras of dimension 5\label{3}}
\end{center}}

{\renewcommand*{\arraystretch}{1.4}
	\begin{center}	
		\begin{tabular}{|c|c|c|}
			\hline
			Lie algebra&The non vanishing bracket&The metric\\
			\hline
			$\mathrm{osc}(6,(\la_1,\la_2))$&
			$[e_2,e_4]=[e_3,e_5]=e_1,[e_6,e_2]=\la_1 e_4$&$\{(2,2)=(3,3)=(4,4)=(5,5)=\frac1{\la} $\\
			&$[e_6,e_4]=-\la_1e_2,[e_6,e_3]=\la_2e_5,[e_6,e_5]=-\la_2e_3$&$(1,6)=1\}$\\		
			\hline
			$\mathfrak{l}_{2,\la}$&
			$[e_2,e_3]=e_1,[e_4,e_5]=\lambda e_1,[e_6,e_2]=e_3,$&$\{-(2,2)=(3,3)=(4,4)=(5,5)=1 $\\
			&$[e_6,e_3]=e_2,[e_6,e_4]=\la e_5,[e_6,e_5]=-\la e_4$&$(1,6)=1\}$\\
			\hline
			$\mathrm{so}(3,1)$&$[e_1,e_2]=e_3,[e_1,e_3]=-e_2,[e_1,e_4]=e_5,[e_1,e_5]
			=-e_4,$&$\{(1,1)=(2,2)=(3,3)=1$\\
			&$[e_2,e_3]=e_1,[e_2,e_4]=e_6,[e_2,e_6]=-e_4,[e_3,e_5]=
			e_6,$&$(4,4)=(5,5)=(6,6)=-1 \}$\\
			&$[e_3,e_6]=-e_5,[e_4,e_5]=-e_1,[e_4,e_6]=-e_2,
			[e_5,e_6]=-e_3.$&\\
			\hline
			$T^*\mathrm{sl}(2,\R)$&$[e_1,e_2]=e_3,[e_3,e_1]=2e_1,[e_3,e_2]=-2e_2$&$\{(1,4)=(2,5)=(3,6)=1 \}$\\
			&$[e_1,e_1^*]=2e_3^*,[e_1,e_3^*]=-e_2^*,[e_2,e_2^*]=-2e_3^*  $&\\
			&$[e_2,e_3^*]=e_1^*,[e_3,e_1^*]=-2e_1^*,[e_3,e_2^*]=2e_2^*$&\\
			\hline
			$T^*\mathfrak{su}(2)$&$[e_1,e_2]=e_3,[e_2,e_3]=e_1,[e_3,e_1]=e_2$&$\{(1,4)=(2,5)=(3,6)=1 \}$\\
			&$[e_1, e_2^*] = e_3^*, [e_1, e_3^*] = -e_2^*, [e_2, e_1^*] = -e_3^*,$&\\
			&$[e_2, e_3^*] = e_1^*, [e_3, e_1^*] = e_2^*, [e_3, e_2^*] = -e_1^*.$&\\
			\hline	
			$\mathfrak{n}_1(2,2)$&$[e_6,e_3]=e_2,[e_6,e_5]=e_4,[e_3,e_5]=e_1.$&$\{(1,6)=(2,5)=-(3,4)=1\}$\\
			\hline
			$\mathfrak{n}_2(2,2)$&
			$[e_6,e_2]=e_2+te_3,[e_6,e_3]=-te_2+e_3,[e_6,e_4]=-e_4+te_5$&$\{(1,6)=(2,5)=-(3,4)=1\}$\\
			&$[e_6,e_5]=-te_4-e_3, [e_2,e_4]=-te_1,[e_2,e_5]=e_1,$&\\	
			&$[e_3,e_4]=-e_1,[e_3,e_5]=-te_1. t>0$&\\
			\hline
			$\mathfrak{n}_3(2,2)$&$[e_6,e_2]=e_3,[e_6,e_3]=-e_2,[e_6,e_4]=\epsilon e_2+e_5,\e^2=1,$&$	\{(1,6)=(2,5)=-(3,4)=1\}$\\
			&$[e_6,e_5]=\epsilon e_3-e_4, [e_2,e_4]=-e_1,[e_3,e_5]=-e_1,[e_4,e_5]=\e e_1.$&\\	
			\hline
			$\mathfrak{n}_4(2,2)$&$	[e_6,e_2]=e_3,[e_6,e_3]=-e_2,[e_6,e_4]=te_5,$&$\{-(2,2)=-(3,3)=(4,4)=(5,5)=1  $\\	
			&$[e_6,e_5]=-te_4,  [e_2,e_3]=-e_1,[e_4,e_5]=te_1.\quad (t>0)$&
			$(1,6)=1\}$\\
			\hline
			$\mathfrak{n}_5(2,2)$&$[e_6,e_2]=e_2,[e_6,e_3]=e_2+e_3,[e_6,e_4]=-e_4,[e_6,e_5]=e_4-e_5,$&$\{(1,6)=(2,5)=-(3,4)=1\}$\\
			&$[e_2,e_5]=e_1,[e_3,e_4]=-e_1,[e_3,e_5]=e_1.$&\\
			\hline
			$\mathfrak{n}_6(2,2)$&$[e_6,e_2]=e_2,[e_6,e_3]=-e_3,[e_6,e_4]=te_4,$&$\{(1,6)=(2,3)=(4,5)=1  \}$\\
			&$[e_6,e_5]=-te_5,  [e_2,e_3]=e_1,[e_4,e_5]=te_1,t\geq1$&\\
			\hline
		\end{tabular}
		\captionof{table}{Indecomposable non abelian quadratic Lie algebras of dimension 6\label{4}}
\end{center}}

\end{document}